\def\re{{\Re e\,}}
\def\p{{\mathfrak{p}}}
\newcommand{\Bigoplus}{\mathop{\bigoplus}\limits}
\theoremstyle{plain}
\newtheorem*{thm*}{Theorem}
\newtheorem{thm}{Theorem}[section]
\newtheorem{conj}{Conjecture}
\newtheorem{pro}{Proposition}[section]
\theoremstyle{definition}
\newtheorem{dfn}{Definition}[section]
\numberwithin{equation}{section}
\title{Chebotarev-Sato-Tate distribution for abelian surfaces potentially of $\rm{GL}_2$-type}
\author{Mohammed Amin Amri} 
 \newcommand{\Addresses}{{
  \bigskip

  Mohammed Amin.~Amri, \textsc{Higher School of Education and Training, Ibn Tofail University,Kenitra, Morocco}\par\nopagebreak
  \textit{E-mail address}, Mohammed Amin~Amri: \texttt{amri.amine.mohammed@gmail.com}

}}   
\begin{document}

\maketitle
\begin{abstract}
In this paper, we state a hybrid Chebotarev-Sato-Tate conjecture for abelian varieties and we prove it in several particular cases using current potential automorphy theorems.
\end{abstract}
\section{Introduction}

The Sato-Tate conjecture has a long history which goes back to the second half of the twentieth century. It states that the Frobenius angels $\theta_p$ of a non-CM elliptic curve $E$ defined over $\mathbb{Q}$, are equidistributed in the interval $[0,\pi]$ with respect to the measure $\mu_{ST}:=\frac{2}{\pi}\sin^{2}\theta d\theta$ when $p$ tends to infinity.

The first reduction of the conjecture was initiated by Tate himself in \cite[\S 4]{Ta65} about 1965, wherein he outlined a heuristic that reduces essentially the conjecture to the Hadamard-de la Vall\`ee Poussin's argument for the symmetric power $L$-functions of the Galois representation attached to the elliptic curve. Soon afterwards, Serre in \cite{Serre2} deduced this observation as a consequence of his characterization of the problem of equidistribution of a sequence on the space of conjugacy classes of a compact group with respect to its Haar measure, in terms of analytic continuation and nonvanishing of some $L$-functions associated to the group in question, using the standard Tauberian theory.

In retrospect, motivated by the Shimura-Taniyama conjecture proved by Wiles and his successors, Langlands \cite{Lang70} proposed a conjectural approach to attack the problem of analytic continuation which have later inspired so much generalization of the conjecture (see for instance \cite[\S 13]{Serre3}). Roughly speaking, he predicts that the Galois representation attached to the elliptic curve can be associated to a certain cuspidal automorphic representation. Hence, the required analytic properties on its $m$-th symmetric power $L$-functions could be deduced from standard analytic proprieties of the $L$-functions associated to some cuspidal automorphic representation of ${\rm GL}_{m+1}(\mathbb A_{\mathbb Q})$. For more historical details the reader could consult \cite{Cl}.

The development of the theory of potential automorphy which was initiated by R.Taylor in his seminal papers \cite{Tay02,Tay03}, culminated Clozel, Harris, Shepherd-Barron, Taylor in \cite{CHT,HSBT10} proving the Sato-Tate conjecture. On the basis of these insights, M.R. Murty and V.K. Murty \cite{Murty} derived a hybrid Chebotarev-Sato-Tate theorem that shows that the Artin symbols of any given solvable Galois extension and the Frobenius angles of a non-CM elliptic curve over a totally real number field are equidistributed. In light of the recent progress in potential automorphy theory, especially the work of Barnet-Lamb et al. \cite{BGG11,BGGT14}, Wong \cite{Wong} extended Murty-Murty's work to a pair of non-CM Hilbert modular forms that are not twist-equivalent. Moreover, he relaxed the severe restriction of solvability. 

In the present paper, we aim to tackle a similar paradigm to that in \cite{Murty,Wong} in the context of abelian surfaces potentially of $\rm{GL}_2$-type, which we believe would yield deep arithmetical consequences. Roughly, we shall prove the following theorem in this direction (see \thref{thm1,thm2,thm3,thm4}).
\begin{thm}\thlabel{mainTh}
Let $A$ be an abelian surface over a totally real number field $F$ potentially of $\rm{GL}_2$-type, and let $K/F$ be a Galois extension of finite degree with Galois group $G\cong G_1\times G_2$, such that $G_1$ is abelian and $K^{G_1}/F$ is totally real, assume further that $K$ is linearly disjoint over $F$ from some at most quadratic extension of $F$, then \thref{CST} holds for $A$.
\end{thm}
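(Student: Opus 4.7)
The plan is to follow the strategy pioneered by Murty--Murty \cite{Murty} and extended by Wong \cite{Wong}, namely to translate the joint equidistribution statement of \thref{CST} into analytic properties of a family of $L$-functions, and then to establish those properties using the potential automorphy theorems of Barnet-Lamb--Gee--Geraghty--Taylor \cite{BGG11,BGGT14}. Concretely, I would first invoke Serre's Tauberian criterion for equidistribution on a product space to reduce \thref{CST} for $A$ to the analytic continuation to $\re(s)\geq 1$ and non-vanishing on the line $\re(s)=1$ of every nontrivial $L$-function of the form $L(s,\rho\boxtimes\xi;A/F)$, where $\rho$ runs over the nontrivial irreducible representations of $G$ and $\xi$ over the irreducible representations of the Sato--Tate group $\mathrm{ST}_A$. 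Since $G\cong G_1\times G_2$ with $G_1$ abelian, each such $\rho$ decomposes as $\chi_1\otimes\tau_2$ with $\chi_1$ a linear character; applying Brauer's induction theorem to $\tau_2$ and invoking the inductivity of Artin $L$-functions, the problem reduces to controlling $L$-functions attached to a Hecke-character twist of $A$ over an appropriate intermediate field, on which the Galois group is soluble.

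Next, I would exploit the hypothesis that $A$ is potentially of $\mathrm{GL}_2$-type to fix a totally real extension $F'/F$ of degree at most two such that $A_{F'}$ is of $\mathrm{GL}_2$-type. The linear disjointness of $K$ and $F'$ over $F$ guarantees that base change to the compositum $KF'$ preserves the irreducible decomposition of the regular representation of $G$ and does not collapse the joint structure being studied. Over $F'$, the rational Tate module of $A_{F'}$ decomposes, up to extension of scalars, into (at most two) two-dimensional pieces corresponding to Hilbert modular Galois representations, whose symmetric powers are potentially automorphic by \cite{BGG11,BGGT14}. Combining this with Rankin--Selberg theory for $\mathrm{GL}_m\times\mathrm{GL}_n$ and the Jacquet--Shalika non-vanishing theorem yields the required analytic properties for each constituent $L$-function produced by the Brauer decomposition.

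The bulk of the work, and the main obstacle, will be the case analysis corresponding to \thref{thm1,thm2,thm3,thm4}: the Sato--Tate group of an abelian surface potentially of $\mathrm{GL}_2$-type takes several non-conjugate forms, depending on whether $A$ is already of $\mathrm{GL}_2$-type over $F$, becomes so over a quadratic CM or totally real extension, splits up to isogeny as a product of two elliptic curves, or admits quaternionic multiplication. In each case the decomposition into induced symmetric-power and Rankin--Selberg $L$-functions differs, and one must verify that the automorphic sources produced by the potential automorphy step are cuspidal, non-isomorphic, or not twist-equivalent as appropriate, so that the non-vanishing statement is genuinely accessible. The linear-disjointness assumption intervenes precisely to prevent a character of $G_1$ from coinciding with the quadratic character cutting out $F'/F$, which would otherwise introduce a pole or a trivial factor in one of the twisted $L$-functions and obstruct the final non-vanishing argument.
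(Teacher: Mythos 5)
Your skeleton (Serre's criterion, Brauer induction, potential automorphy, Rankin--Selberg) is the same as the paper's, but several concrete steps would not go through as you have arranged them. First, your reduction omits the representations that are trivial on $G$ but nontrivial on $ST_A$: Serre's criterion must be checked for \emph{every} nontrivial irreducible of the product $ST_A\times G$, and for the non-connected Sato--Tate groups occurring here this already forces you to classify induced and extended representations of $ST_A$ (the $\mathrm{Ind}_{ST_A^0}^{ST_A}\rho_{m,n}$ and the two extensions $\rho^1_n,\rho^2_n$), which your proposal never does. More seriously, your structural assumption that there is a \emph{totally real} extension $F'/F$ of degree at most two over which the Tate module splits into ``Hilbert modular'' two-dimensional pieces amenable to \cite{BGG11,BGGT14} is false in several of the relevant Galois types: the minimal field $L$ with $\mathrm{End}^0_L(A)=\mathrm{End}^0_{\overline{\mathbb{Q}}}(A)$ (or the quadratic $E$ in type $\mathbf{E}[D_{2n}]$) can be a CM field --- e.g.\ type $\mathbf{C}[C_2]$, and type $\mathbf{B}[C_2]$ with $A_L$ simple with real multiplication --- so the rank-two compatible systems one must make automorphic live over a CM base, and the paper has to invoke the CM-field potential automorphy theorem of \cite{ACC}; this is precisely why \thref{Potaut} is stated in both the totally real and the CM case. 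Relatedly, the linear-disjointness hypothesis is used in the paper to identify $G$ with $\mathrm{Gal}(KL/L)$ (resp.\ $\mathrm{Gal}(KE/E)$) so that the Artin-symbol data survives the base change needed to unwind induced $L$-functions, not primarily to avoid a coincidence with the quadratic character cutting out $F'$.

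Second, the order of your Brauer step is the wrong way around. If you apply Brauer inside $G$ (or $G_2$) first, you are left needing cuspidal automorphy of the symmetric-power systems over the subfields $K^{H_i}$, and the potential automorphy field $L'$ is in general \emph{not} solvable over these, so Arthur--Clozel descent is unavailable; this is exactly the obstruction the hypothesis ``$G\cong G_1\times G_2$ with $G_1$ abelian and $K^{G_1}/F$ totally real'' is designed to remove. The paper first chooses $L'$ via \thref{Potaut} so that $L'\supseteq K^{G_1}$, inflates the $G_2$-component $\chi$ of $\xi$ to $\tilde G=\mathrm{Gal}(L'/F)$, applies Brauer \emph{there} (so each $L'/L'^{H_i}$ is solvable and Arthur--Clozel brings the cuspidal representation down from $L'$), and absorbs the abelian $G_1$-component as a Hecke-character twist via $\mathrm{GL}(n)\times\mathrm{GL}(1)$ functoriality. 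Finally, your analytic toolkit is incomplete: for the extended representations $\rho^k_n\otimes\xi$ in type $\mathbf{B}[C_2]$ (and the analogous situations in the $\mathbf{E}$-types) the relevant twisted $L$-functions are Asai or twisted Asai $L$-functions relative to a quadratic extension, whose continuation and non-vanishing on $\re(s)\ge 1$ the paper obtains from Grbac--Shahidi \cite{GS15}; generic $\mathrm{GL}_m\times\mathrm{GL}_n$ Rankin--Selberg theory plus a Jacquet--Shalika/Shahidi non-vanishing statement does not cover them. You do correctly flag the need to rule out twist-equivalence of the two rank-two pieces (the paper does this with a projective-representation argument and Faltings' isogeny theorem), but without the CM-case potential automorphy, the correct locus of the Brauer induction, and the Asai input, the argument as proposed would stall at each of these points.
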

The paper is organized in the following manner. Section~\ref{Sec1} is subdivided into three subsections in which we present some preliminaries. We begin in Subsection~\ref{Sec1.1} by introducing the relevant material that will be needed to state the conjecture as well as some notation and terminology to be used throughout the paper. In Subsection~\ref{Sec1.2} we outline the strategy adopted to prove \thref{mainTh}, we then state and prove a minor variant of the current potential automorphy results adapted to our purpose. Lastly, in Subsection~\ref{Sec1.3} we give a brief exposition from \cite{FKRS} of the classification of the Sato-Tate group of abelian surfaces. In Section~\ref{Sec2} our main results are stated and proved.
\paragraph*{Notations.}  Let $A$ be an abelian variety over a number field $F$ we denote by  $\mathrm{End}_F(A)$ (resp. $\mathrm{End}^0_F(A)$) the ring of  endomorphisms  of $A$  defined over $F$ (resp. the  endomorphism  algebra $\mathrm{End}_F(A)\otimes_{\mathbb{Z}} \mathbb{Q}$). If $K/F$ is an extension, we denote by $A_K$ the base change of $A$ to $K$. Throughout, the paper we let $L$ stand for the minimal extension of $F$ such that $\mathrm{End}^0_{L}(A)=\mathrm{End}^0_{\overline{\mathbb Q}}(A)$. We record the following notations that will be used in the text.
\begin{itemize}
\item ${\rm Irr}(G)$ the set of irreducible characters of a group $G$.
\item ${\rm U}(n)=\lbrace M\in {\rm GL}_n(\mathbb{C})\;|\; ^{t}\overline{M}M=I_n\rbrace$ the unitary group.
\item ${\rm SU}(n)=\lbrace M\in {\rm U}(n)\;|\; \det(M)=1\rbrace$ the special unitary group.
\item ${\rm GSp}_{2n}(F)=\lbrace M\in {\rm GL}_{2n}(F)\;|\; ^{t}M\Omega M=\nu(M)\Omega\;\text{for some}\; \nu(M)\in F^{\times}\rbrace$ the symplectic similitude group, where $F=\mathbb{C}$ or $\mathbb{Q}_{\ell}$ and $\Omega=\begin{pmatrix}
0 & I_n \\
-I_n & 0
\end{pmatrix}$.
\item ${\rm USp}(2n)=\lbrace M\in {\rm GSp}_{2n}(\mathbb{C})\;|\; \nu(M)=1\rbrace$ the symplectic group.
\item $C_n$, $D_{2n}$ and $\mu_{2n}$ are respectively the cyclic group, the Dihedral group and the group of $2n$-th roots of unity.
\end{itemize}

\section{Preliminaries}\label{Sec1}
\subsection{Statement of the conjecture}\label{Sec1.1}
We begin by recalling the definition of the Sato-Tate group and introduce a definition of the ``Chebotarev-Sato-Tate group" following closely the exposition of \cite[Section 3.2]{Sut} see also \cite[Chapter 8]{Serre}.\\
Let $A$ be a polarized abelian variety of dimension $g$ over a number field $F$ and  let $K/F$ be a finite Galois extension. For a fixed rational prime $\ell$,  we define the $\ell$-adic Tate module $T_\ell(A):= \varprojlim_{n} A[\ell^n]$ to be a free $\mathbb Z_\ell$-module of rank $2g$, and the $\ell$-adic rational Tate module $V_\ell(A):=T_\ell(A)\otimes_{\mathbb{Z}}\mathbb{Q}$ to be a $\mathbb{Q}_\ell$-vector space of dimension $2g$. Fix a symplectic basis of the first singular homology group $H_1(A(\mathbb{C}),\mathbb{Q})$ for a fixed embedding $F\hookrightarrow\mathbb{C}$. We identify the Weil pairings on the rational Tate module with the cup product pairing in \'etal homology and singular homology by means of the following identifications (the first two ones follow from \cite[Theorem 15.1]{Milne} while the second ones follow by exploiting the interpretation of $A(\mathbb{C})$ as a complex torus $\mathbb{C}^{g}/\Lambda$ where $\Lambda$ is the lattice $H_1(A(\mathbb{C}),\mathbb{Z})$)
$$
V_\ell(A)\cong H_{1,\text{\'et}}(A_{\overline{\mathbb{Q}}},\mathbb{Q}_\ell)\cong H_{1,\text{\'et}}(A_{\mathbb{C}},\mathbb{Q}_\ell)\cong H_1(A(\mathbb{C}),\mathbb{Q}_\ell)\cong H_1(A(\mathbb{C}),\mathbb{Q})\otimes \mathbb{Q}_\ell.
$$
Accordingly, the symplectic basis for $H_1(A(\mathbb{C}),\mathbb{Q})$ gives a symplectic basis for $V_\ell(A)$ and the action of the Galois group $G_F: = \mathrm{Gal}(\overline{F}/F)$  give rises to a $\ell$-adic Galois representation
$$
\rho_{\ell,A} : G_F\longrightarrow \mathrm{GSp}_{2g}(\mathbb{Q}_\ell).
$$
Let $ G_\ell$ be the image of $\rho_{\ell,A}$ and $G^{\mathrm{Zar}}_\ell$ is its Zariski closure inside $\mathrm{GSp}_{2g}(\mathbb{Q}_\ell)$  (the $\ell$-adic monodromy group of $A$). Let $\nu : G_{\ell}^{\mathrm{Zar}}\longrightarrow \mathbb{G}_m$ be the similitude character on $G_{\ell}^{\mathrm{Zar}}$ and we let $G_{\ell}^{1,\mathrm{Zar}}$ denote its kernel.

For convenience we will fix once and for all an isomorphism $\imath : \overline{\mathbb{Q}}_\ell\xrightarrow{\sim}{} \mathbb{C}$ for all $\ell$ (to not overload the notation we will omit $\ell$ from the notation). This enable us to embed $\mathrm{GSp}_{2g}(\mathbb{Q}_\ell)$ into $\mathrm{GSp}_{2g}(\mathbb{C})$. Setting $G^{\mathrm{Zar}}_{\ell,\imath}(\mathbb{C}):=G^{\mathrm{Zar}}_{\ell}(\mathbb{Q}_\ell)\otimes_{\mathbb{Q}_\ell,\imath}\mathbb{C}$ and $G^{1,\mathrm{Zar}}_{\ell,\imath}(\mathbb{C}):=G^{1,\mathrm{Zar}}_{\ell}(\mathbb{Q}_\ell)\otimes_{\mathbb{Q}_\ell,\imath}\mathbb{C}$. 
\begin{dfn}
We define the \textit{Sato-Tate group} $ST_A\subset \mathrm{USp}(2g)$ of $A$ as the maximal compact subgroup of the complex Lie group $G^{1,\mathrm{Zar}}_{\ell,\imath}(\mathbb{C})$. We let $ST$ denote $ST_A\times \mathrm{Gal}(K/F)$, what we shall referring to the  \textit{Chebotarev-Sato-Tate group}.
\end{dfn}
Let $S$ be the set of primes which lie over $\ell$ and those primes at which $A$ has bad reduction or at which $K/F$ ramifies. For $\p\notin S$, we let $\mathrm{Frob}_\p$ denote an arithmetic Frobenius at $\p$ in $G_F$ (here and subsequently throughout the paper), let $g_\p$ be its image under the map
$$G_F \relbar\joinrel\xrightarrow{\rho_{\ell,A}}{} G_\ell \lhook\joinrel\relbar\joinrel\relbar\joinrel\rightarrow G^{\mathrm{Zar}}_\ell(\mathbb{Q}_\ell)\lhook\joinrel\relbar\joinrel\relbar\joinrel\rightarrow G^{\mathrm{Zar}}_{\ell,\imath}(\mathbb{C}).$$
Let $q_{\p}$ be the order of the residue field $\mathbb{F}_{\p}$ at $\p$. From the main theorem of \cite{Ta66} together with part (a) of Theorem 2 in {\it ibid} we see that the matrix  $g_\p q_{\p}^{-1/2}$ is diagonalizable and has eigenvalues of absolute value $1$, it therefore lies in a compact subgroup of $G^{1,{\rm Zar}}_{\ell,\iota}(\mathbb{C})$. This subgroup must be conjugate to a subgroup of the maximal compact subgroup $ST_A$. It then follows that $g_\p q_{\p}^{-1/2}$ should be conjugate to an element of $ST_A$. Define $x_{\p}$ to be the conjugacy class of $g_\p q_{\p}^{-1/2}  \times \sigma_\p$ in the Chebotarev-Sato-Tate groupe $ST$, where $\sigma_\p:= \left(\frac{K/F}{\p}\right)$ denote the Artin symbol relative to the extension $K/F$. The Haar measure $\mu$ of $ST$ is a product of the Haar measure $\mu_{ST}$ of $ST_A$ and the discrete measure on ${\rm Gal}(K/F)$. We let $X$ stands for the set of conjugacy classes of $ST$. One may conjecture.

\begin{conj}\thlabel{CST}
The sequence $\lbrace x_\p\rbrace_{\p\not\in S}$, where the primes $\p$ are ordered by norm, is equidistributed on $X$ with respect to the pushforward of the Haar measure $\mu$ on $ST$ to $X$.
\end{conj}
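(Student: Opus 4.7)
The plan is to invoke Weyl's equidistribution criterion in the analytic form that Serre distilled from Tate's heuristic: the sequence $\{x_\mathfrak{p}\}_{\mathfrak{p}\notin S}$ is equidistributed on $X$ with respect to the pushforward of $\mu$ if and only if, for every non-trivial irreducible character $\chi\in\mathrm{Irr}(ST)$, the partial $L$-function
\[
L(s,\chi)=\prod_{\mathfrak{p}\notin S}\det\bigl(1-\chi(x_\mathfrak{p})\,N\mathfrak{p}^{-s}\bigr)^{-1}
\]
extends holomorphically to $\mathrm{Re}(s)\geq 1$ and is non-vanishing on the line $\mathrm{Re}(s)=1$. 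A standard Tauberian argument then forces the required equidistribution, so the entire task reduces to proving these two analytic properties for the family of $L$-functions indexed by $\mathrm{Irr}(ST)$.

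Because $ST=ST_A\times\mathrm{Gal}(K/F)$ is a direct product, every irreducible character factors as $\chi=\chi_1\boxtimes\chi_2$ with $\chi_1\in\mathrm{Irr}(ST_A)$ and $\chi_2\in\mathrm{Irr}(\mathrm{Gal}(K/F))$. Up to finitely many Euler factors at primes of $S$, which are harmless on $\mathrm{Re}(s)\geq 1$, the corresponding $L$-function matches a Rankin-Selberg-type twist
\[
L(s,\chi)\;=\;L(s,\rho_{\chi_1}\otimes\tau_{\chi_2}),
\]
where $\tau_{\chi_2}$ is the Artin representation of $G_F$ with character $\chi_2$, and $\rho_{\chi_1}$ is the compatible system of $\ell$-adic representations obtained by applying to the motive $h^1(A)$ the algebraic Schur functor on $\mathrm{USp}(2g)$ whose character is $\chi_1$. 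The case $\chi_2=\mathbf{1}$ is the plain Sato-Tate question for $A$, and the case $\chi_1=\mathbf{1}$ is the classical Artin $L$-function of $K/F$, whose analytic behaviour on $\mathrm{Re}(s)\geq 1$ is already known.

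The next ingredient is to establish automorphy of $\rho_{\chi_1}$ and then to bootstrap it to the twist by $\tau_{\chi_2}$. Since $A$ is potentially of $\mathrm{GL}_2$-type, after base change to $L$ the motive $h^1(A)$ decomposes into summands attached to Hilbert modular forms, and the algebraic characters $\chi_1$ of $ST_A$ are built out of symmetric-power characters on each factor. By the classification of Fit\'e-Kedlaya-Rotger-Sutherland recalled in Subsection~\ref{Sec1.3}, only finitely many shapes of $ST_A$ (and hence of $\chi_1$) arise, so one can proceed case by case. For each of them the potential-automorphy results recalled in Subsection~\ref{Sec1.2} produce, after passing to a suitable totally real auxiliary extension $F'/F$, a cuspidal automorphic representation whose $L$-function realizes $L(s,\rho_{\chi_1})$; tensoring with $\tau_{\chi_2}$ and invoking the standard Jacquet-Shalika non-vanishing together with Rankin-Selberg theory for $\mathrm{GL}_n\times\mathrm{GL}_m$ yields the analytic continuation and non-vanishing of $L(s,\chi)$ over $F'$.

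The principal obstacle is descending this automorphy from $F'$ back to the base field $F$ in a manner that is compatible with the Artin twist $\tau_{\chi_2}$. One must be able to choose the potential-automorphy extension $F'$ linearly disjoint from $K$, so that Brauer induction applied to $\tau_{\chi_2}$ combined with cyclic base change (Langlands, Arthur-Clozel) reassembles the global $L$-function over $F$ without destroying analyticity on $\mathrm{Re}(s)\geq 1$. This is exactly why the hypotheses of \thref{mainTh} are needed: the factorization $G\cong G_1\times G_2$ with $G_1$ abelian and $K^{G_1}/F$ totally real permits a solvable descent of the non-abelian component to a totally real ground field, while linear disjointness of $K$ from an auxiliary at-most-quadratic extension of $F$ gives the freedom required in selecting $F'$. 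The technical heart of the argument will therefore be the interplay between potential automorphy and Brauer-Artin induction through each entry of the FKRS list, a step in the spirit of the arguments in \cite{Murty,Wong} but sensibly more delicate in the present higher-dimensional and non-solvable setting.
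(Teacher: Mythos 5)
Your plan follows the paper's overall architecture faithfully: Serre's criterion, the product structure $ST=ST_A\times\mathrm{Gal}(K/F)$ so that irreducibles factor, identification of $L(s,\chi)$ with an $L$-function of a compatible system twisted by an Artin representation, potential automorphy after an auxiliary totally real extension, Brauer induction together with Arthur--Clozel solvable base change, Rankin--Selberg theory and a non-vanishing theorem on the edge, and finally a case-by-case run through the FKRS classification. All of these are the paper's actual ingredients, and you correctly identify the role of the hypotheses on $G\cong G_1\times G_2$ and on linear disjointness.

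Two points where your sketch underestimates the difficulty, and one of them is a genuine gap. First, when the Sato--Tate group $ST_A$ is disconnected (which happens in almost every Galois type the paper treats: $\mathbf{B}[C_2]$, $\mathbf{C}[C_2]$, $\mathbf{E}[D_{2n}]$, etc.), the irreducible characters $\chi_1$ of $ST_A$ are \emph{not} simply algebraic Schur-functor characters of symmetric-power type. By Clifford theory (\cite[Lemma 23]{Johansson}) they are either restrictions that extend in two ways from $ST_A^0$ or genuine inductions $\mathrm{Ind}_{ST_A^0}^{ST_A}$. The induced ones match, on the Galois side, with $L$-functions over the quadratic extension $L$ (or $E$), which is where the hypothesis of linear disjointness of $K$ from $L$ is actually needed: it lets you identify $G=\mathrm{Gal}(KL/L)$ and shift the whole problem to $L$, not (as you describe) to ``choose $F'$ freely.'' Second, and more seriously, for the two extensions $\rho_n^1,\rho_n^2$ of $\mathrm{Sym}^n\mathrm{St}\otimes\mathrm{Sym}^n\mathrm{St}$ in case $\mathbf{B}[C_2]$, and in the situation $L\not\subseteq L'^{H_i}$ after Brauer induction, the relevant $L$-function is an Asai or twisted-Asai $L$-function relative to $L'_i/L_i$ and its non-vanishing on $\mathrm{Re}(s)\geq 1$ is not a consequence of Jacquet--Shalika; the paper has to appeal to \cite[Theorem 4.3]{GS15} via the Langlands--Shahidi method. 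Your outline contains neither the Clifford-theoretic split of irreducibles nor the Asai step, so as written it does not close the most delicate sub-cases. The framework is the right one, but these two missing ingredients are exactly where the case analysis becomes nontrivial.
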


\subsection{Strategy of the proof}\label{Sec1.2}
In this subsection, we record the relevant facts that will be needed in various stages of the approach that we shall follow to prove \thref{CST}. We start by quoting the following result of Serre \cite[Appendix to Chap. I]{Serre2} which will play a central role in the sequel.
\begin{thm}[Serre 1968]
Let $G$ be a compact group, and ${\rm Conj}(G)$ its space of conjugacy classes.  Suppose that for each prime $p$, we associate a conjugacy class $x_p$ in ${\rm Conj}(G)$. Then the sequence $\{x_p\}_{p}$ is equidistributed with respect to the image of the Haar measure of $G$ in ${\rm Conj}(G)$ if and only if, for every irreducible nontrivial representation $\rho$ of $G$, the $L$-function
 $$
 L(s,\rho):=\prod_{p}\det(1-\rho(x_p)p^{-s})^{-1}
 $$
extends to an analytic function for $\re(s)\ge 1$, and does not
vanish there.
\end{thm}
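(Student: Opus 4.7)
The strategy is the classical one due to Serre, combining Weyl's equidistribution criterion with the Peter-Weyl theorem on one side, and a Tauberian theorem of Wiener-Ikehara type on the other. The plan is to reduce equidistribution of $\{x_p\}$ to asymptotic cancellation of the character sums $\sum_{p\le N}\chi_\rho(x_p)$, and then to extract this cancellation from the analytic behavior of $L(s,\rho)$ near $\re(s)=1$.

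First, continuous functions on ${\rm Conj}(G)$ correspond to continuous central functions on $G$, and by the Peter-Weyl theorem (together with a Stone-Weierstrass argument on $C({\rm Conj}(G))$) the irreducible characters $\{\chi_\rho\}_{\rho\in\widehat G}$ span a uniformly dense subspace of this space. By Weyl's criterion, equidistribution of $\{x_p\}$ with respect to the pushforward Haar measure is equivalent to
$$\frac{1}{\pi(N)}\sum_{p\le N}\chi_\rho(x_p)\xrightarrow[N\to\infty]{}\int_G \chi_\rho\,d\mu_G$$
for every $\rho\in\widehat G$. For the trivial representation both sides equal $1$, while for $\rho$ nontrivial the right-hand side vanishes by Schur orthogonality; the whole equivalence therefore boils down to proving $\sum_{p\le N}\chi_\rho(x_p)=o(\pi(N))$ for each nontrivial irreducible $\rho$.

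To tie these character sums to $L(s,\rho)$, expand the logarithm of its Euler product:
$$\log L(s,\rho)=\sum_p\sum_{k\ge 1}\frac{\chi_\rho(x_p^k)}{k\,p^{ks}},$$
in which the $k\ge 2$ terms form a Dirichlet series absolutely convergent on $\re(s)>1/2$ and hence contribute harmlessly near $s=1$. The assumed analytic continuation and non-vanishing of $L(s,\rho)$ on $\re(s)\ge 1$ then translate into good boundary behavior for $\sum_p\chi_\rho(x_p)p^{-s}$ as $s\to 1^+$, and a Wiener-Ikehara type Tauberian theorem, applied after passing to $L(s,\rho)\,\overline{L(s,\bar\rho)}$ to secure non-negative Dirichlet coefficients, yields the required $o(\pi(N))$ bound. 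The converse direction is the easier half: equidistribution together with partial summation produces the needed boundary estimates, from which analytic continuation and non-vanishing of $L(s,\rho)$ on $\re(s)\ge 1$ follow.

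The main technical obstacle is the Tauberian step, because it is precisely non-vanishing on the \emph{entire} line $\re(s)=1$ and not merely at $s=1$ that is required: a hypothetical zero at some $s=1+it_0$ would leave the partial sums oscillating and destroy the $o(\pi(N))$ estimate for a single nontrivial character, thereby breaking the equivalence. Everything else (density of characters, harmless treatment of higher Euler factors, and orthogonality of characters) is essentially formal once the compact group framework is set up.
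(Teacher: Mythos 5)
The paper itself does not prove this theorem: it is quoted from the appendix to Chapter~I of Serre's McGill notes, so the only meaningful comparison is with Serre's classical argument, and your outline does reproduce its architecture in the sufficiency direction (Peter--Weyl plus the Weyl criterion reduce equidistribution to $\sum_{p\le N}\chi_\rho(x_p)=o(\pi(N))$ for every nontrivial irreducible $\rho$, and this bound is extracted from the boundary behaviour of $\log L(s,\rho)$ by a Tauberian theorem). However, two of your steps are genuinely defective. First, the positivity device is wrong: the Dirichlet coefficients of $\log\bigl(L(s,\rho)\,L(s,\bar\rho)\bigr)$ are $2\,\mathrm{Re}\,\chi_\rho(x_p^k)/k$, which are real but by no means non-negative, so Wiener--Ikehara does not apply to that product. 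The standard repair (and Serre's) is domination rather than conjugation: since $|\chi_\rho(x_p^k)|\le\dim\rho$, one applies the Tauberian theorem to $\dim\rho\cdot\bigl(-\zeta'/\zeta\bigr)(s)\pm\mathrm{Re}\bigl(-L'/L\bigr)(s,\rho)$ and to the analogous imaginary-part combination, where $\zeta(s)=\prod_p(1-p^{-s})^{-1}$ is the Euler factor of the trivial representation; this is exactly where the simple pole at $s=1$ and the classical non-vanishing of $\zeta$ on $\re(s)=1$ enter, an input your sketch never isolates even though the theorem's hypotheses say nothing about the trivial representation.

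Second, the converse is not ``the easier half'' in the form you claim. Equidistribution gives $\sum_{p\le x}\chi_\rho(x_p)=o(x/\log x)$, and partial summation then yields only estimates of the shape $\log L(\sigma+it,\rho)=o\bigl(\log\tfrac{1}{\sigma-1}\bigr)$ as $\sigma\to 1^{+}$ for fixed $t$. Such bounds forbid a zero or pole at a point $1+it_0$ \emph{at which $L(s,\rho)$ is already known to continue meromorphically}, but they cannot manufacture the analytic continuation of $L(s,\rho)$ to the closed half-plane $\re(s)\ge 1$; an $o(\pi(x))$ bound on partial sums controls the Dirichlet series only in the open half-plane and in a limiting sense on the boundary. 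So the ``only if'' direction as you argue it does not follow, and indeed it is only the sufficiency direction that Serre proves in the cited appendix and that this paper ever uses. If you wish to retain the biconditional you must add a standing hypothesis of meromorphic continuation to $\re(s)\ge 1$ (in which case equidistribution is equivalent to holomorphy and non-vanishing of the nontrivial $L(s,\rho)$ on the line), or else restrict yourself to proving the implication from invertibility to equidistribution.
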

Keeping the notations of the preceding subsection. In view of Serre's theorem,  proving the equidistribution of the sequence $\{x_\p\}_{\p}$ over $X$ with respect to the image of the Haar measure of $ST$, turns out to prove that for every irreducible nontrivial representation $\rho$ of $ST$, the $L$-function $$L(s,\rho)=\prod_{\mathfrak{p}}\det(1-\rho(x_\mathfrak{p})q_{\mathfrak{p}}^{-s})^{-1}$$
can be continued analytically and dose not vanish on the region $\re(s)\ge 1$ . For brevity, the property of analytic continuation and non-vanishing on the region $\re(s)\ge 1$ of a $L$-function, will be called ``invertibility''.\\
 Notice that, since $ST_{A}$ is a maximal compact subgroup of the complex Lie group $G^{1,Zar}_{\ell,\imath}(\mathbb{C})$. By \cite[Corollary 2, pp.77]{Serre1} any irreducible representation $\rho$ of $ST_{A}$  may extends uniquely to an irreducible representation $\tilde{\rho}$ of $G^{1,Zar}_{\ell,\imath}(\mathbb{C})$. In view of the exact sequence
$$1 \relbar\joinrel\xrightarrow{}{} G^{1,\mathrm{Zar}}_{\ell} \lhook\joinrel\relbar\joinrel\rightarrow G^{\mathrm{Zar}}_{\ell}\relbar\joinrel\xrightarrow{\nu}{}\mathbb{G}_m \joinrel\relbar\joinrel\rightarrow 1.$$
The group $G^{\mathrm{Zar}}_{\ell}$ is the quotient of $G^{1,\mathrm{Zar}}_{\ell} \times \mathbb{C}^\times$ with a kernel of order two, generated by the element $(-1,-1)$. We then twist $\tilde{\rho}$  by a character $\chi_\omega : z\mapsto z^\omega$ of  $\mathbb{C}^{\times}$ for some $\omega\in\mathbb{Z}$ such that $\tilde{\rho}\otimes\chi_\omega$ is trivial on $\left<(-1,-1)\right>$. By composing $\tilde{\rho}\otimes\chi_\omega$ with the quotient map we obtain an algebraic representation $R(\omega)$ of $G^{\mathrm{Zar}}_{\ell}$. Such as we get a weakly compatible system of Galois representations $R(\omega)\circ\rho_{A,\ell}$ which satisfies 
$$L(s,\rho\otimes\xi)=L(s+\omega/2,R(\omega)\circ\rho_{A,\ell}\otimes\xi)\quad\text{for any} \quad\xi\in\mathrm{Irr}(\mathrm{Gal}(K/F)).$$
The best strategy currently known for checking Serre's criterion for such $L$-functions is to use potential automorphy results \cite[Theorem 7.1.10]{ACC} \cite[Theorem 4.5.1]{BGGT14}  combined with Brauer's theorem \cite{Brauer} and Arthur--Clozel's \cite{AC90} solvable base change, together with known functoriality of $\mathrm{GL}(n)\times\mathrm{GL}(1)$ \`a la \cite{Tay02,HSBT10}. For our purpose we shall need the following simple variant of \cite[Theorem 4.5.1]{BGGT14} (in the totally real case) and \cite[Theorem 7.1.10]{ACC} (in the CM case). 
\begin{thm}\thlabel{Potaut}
\sloppy Suppose that $F$ is a CM (resp. totally real) field, $\mathcal{R}_1=(M_1,S_1,\{Q_{1,v}(X)\},\{r_{1,\lambda}\}, \{\{0,1\}\})$ and $\mathcal{R}_2=(M_2,S_2,\{Q_{2,v}(X)\},\{r_{2,\lambda}\}, \{\{0,1\}\})$ are a pair of strongly irreducible (resp. strongly irreducible, totally odd and polarized) rank $2$ weakly compatible system of $\ell$-adic representations of $G_F$, and $K/F$ is a finite extension of CM (resp. totally real) fields. Then, if $n_1,n_2$ are positive integers, there exists a finite CM (resp. totally real) extension $L'/F$  containing $K$ such that $L'/\mathbb{Q}$ is Galois and the weakly compatible systems $\mathrm{Sym}^{n_1}\mathcal{R}_{1|L'}$ and $\mathrm{Sym}^{n_2}\mathcal{R}_{2|L'}$ are automorphic. 
\end{thm}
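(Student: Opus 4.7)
The plan is to bootstrap the single--system results \cite[Theorem~4.5.1]{BGGT14} and \cite[Theorem~7.1.10]{ACC} into the stated joint version by applying them in succession and then enlarging the ambient Galois extension just enough to recover automorphy of the first compatible system.

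The first step is to invoke \cite[Theorem~4.5.1]{BGGT14} (in the totally real case), respectively \cite[Theorem~7.1.10]{ACC} (in the CM case), on $\mathcal{R}_1$ with the constraint that the output extension contain $K$. This produces a totally real (resp.\ CM) Galois extension $L_1'/\mathbb{Q}$ with $K\subset L_1'$ such that $\mathrm{Sym}^{n_1}\mathcal{R}_{1\mid L_1'}$ is cuspidal automorphic, cuspidality being automatic from strong irreducibility of $\mathcal{R}_1$. The second step is to apply the same theorem to $\mathcal{R}_2$, now with $L_1'$ in place of $K$, producing a further totally real (resp.\ CM) Galois extension $L'/\mathbb{Q}$ with $L_1'\subset L'$ over which $\mathrm{Sym}^{n_2}\mathcal{R}_{2\mid L'}$ becomes cuspidal automorphic.

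The third and decisive step is to verify that $\mathrm{Sym}^{n_1}\mathcal{R}_{1\mid L'}$ remains automorphic. Since $L'/L_1'$ need not be solvable, Arthur--Clozel cyclic base change cannot be invoked directly. To circumvent this, I re-apply the potential automorphy theorem to $\mathcal{R}_1$ a second time, now requiring the output to contain $L'$; this yields a larger totally real (resp.\ CM) Galois extension $L''\supset L'$ with $\mathrm{Sym}^{n_1}\mathcal{R}_{1\mid L''}$ automorphic. Expanding the trivial character of $\mathrm{Gal}(L''/L')$ as a virtual $\mathbb{Z}$--combination of characters induced from solvable subgroups (Brauer) and combining this with Arthur--Clozel solvable base change along the corresponding subextensions of $L''/L'$, the automorphy descends from $L''$ to $L'$; strong irreducibility keeps the intervening symmetric powers cuspidal throughout.

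I expect the descent in the third step to be the main obstacle, since it is precisely where strong irreducibility of the $\mathcal{R}_i$ must be used to guarantee stability of cuspidality under non-solvable base change, and where one has to verify that the self-duality and Hodge--theoretic hypotheses required by \cite[Theorem~4.5.1]{BGGT14} and \cite[Theorem~7.1.10]{ACC} propagate correctly along the tower $F\subset L_1'\subset L'\subset L''$ so that the successive re-applications of those theorems are legitimate.
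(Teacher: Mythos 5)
Your third step is where the argument breaks down, and it is a genuine gap rather than a technicality. Brauer's theorem combined with Arthur--Clozel solvable base change does not let you descend \emph{automorphy} along the (possibly non-solvable) extension $L''/L'$. Writing the trivial character of $\mathrm{Gal}(L''/L')$ as a virtual $\mathbb{Z}$-combination of characters induced from solvable (nilpotent/elementary) subgroups only lets you factor the $L$-function of $\mathrm{Sym}^{n_1}\mathcal{R}_{1|L'}$ as a ratio of products of automorphic $L$-functions over intermediate fields; it produces meromorphic continuation and invertibility statements, but it does not produce a cuspidal automorphic representation of $\mathrm{GL}_{n_1+1}(\mathbb{A}_{L'})$ attached to $\mathrm{Sym}^{n_1}\mathcal{R}_{1|L'}$. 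Descent of automorphy is only available along cyclic (hence solvable) extensions via Arthur--Clozel, and there is no known mechanism to descend along a general Galois extension. So after your second and third applications of potential automorphy you are stuck with automorphy of the two systems over two different fields, which is exactly the problem the theorem is meant to solve.

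The paper sidesteps this entirely because the cited results are already \emph{simultaneous} potential automorphy theorems: \cite[Theorem 4.5.1]{BGGT14} and \cite[Theorem 7.1.10]{ACC} apply to a finite collection of $\ell$-adic representations at once and produce a single extension over which all of them become automorphic. The actual proof therefore restricts both systems to (the Galois closure of) $K$, and in the totally real case does the real work of verifying the hypotheses for the symmetric powers over a common density-one set of primes $\ell$: residual images containing $\mathrm{SL}_2(\mathbb{F}_\ell)$ (via \cite[Lemma 7.1.3]{ACC}), irreducibility of $\mathrm{Sym}^{n_i}\overline{r}_{i,\lambda}$ over $F'(\zeta_\ell)$ for a suitable auxiliary CM quadratic extension $F'$, crystallinity and the right Hodge--Tate multisets, and potential diagonalizability via \cite[Lemma 1.4.3]{BGGT14}; then one application of \cite[Theorem 4.5.1]{BGGT14} gives a CM extension $F''$ over which both symmetric powers are cuspidal automorphic, and the passage to the totally real field $L'$ is a quadratic (hence solvable) descent handled by \cite[Lemma 2.2.2]{BGGT14}, not by Brauer. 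Your proposal also treats as routine the verification that the symmetric-power systems satisfy the hypotheses of the quoted theorems, which is the other place where the proof has actual content; but the fatal issue is the non-solvable descent in your step three.
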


\begin{proof}
Observe that since the Galois closure of a CM (resp. totally real) field over $\mathbb{Q}$ is CM (resp. totally real) one may assume, by replacing $K$ with its Galois closure over $\mathbb{Q}$, that $K/\mathbb{Q}$ is Galois.\\
Let us first tackle the CM case. Consider the restrictions $\mathcal{R}_{1|K}$ and $\mathcal{R}_{2|K}$ of the two weakly compatible systems $\mathcal{R}$ and $\mathcal{R}'$ from $G_F$ to $G_K$ respectively. Notice that, $\mathcal{R}_{|K}$ and $\mathcal{R}'_{|K}$ still strongly irreducible with Hodge-Tate multiset equals to  $\lbrace0,1\rbrace$. By \cite[Theorem 7.1.10]{ACC}, there exists a finite CM extension $L'/K$ with $L'/\mathbb{Q}$ Galois, such that the weakly compatible systems  $\mathrm{Sym}^n_{1}\mathcal{R}_{1|L'}$ and $\mathrm{Sym}^n_{2}\mathcal{R}_{2|L'}$ are automorphic.\\
Next, one turns to the totally real case. Replacing each of the fields $M_i$ associated to $\mathcal{R}_i$ by their compositum, we may assume that $M=M_i$ for $i=1,2$. Let $\mathcal{L}$ be the intersection of the two sets of rationals primes of density one provided by applying \cite[Lemma 7.1.3]{ACC} to the two weakly compatible systems $\mathcal{R}_{1|K}$ and $\mathcal{R}_{2|K}$. It follows from \cite[Lemma A.1.7]{BGGT14} that $\mathcal{L}$  also has density one. Then, for all $\lambda|\ell\in\mathcal{L}$ place of $M$, one has ${\rm SL}_{2}(\mathbb{F}_{\ell})\subset \overline{r}_{i,\lambda}(G_{K})$ where $\overline{r}_{i,\lambda}$ stands for the semi-simplified reduction of $r_{i,\lambda}$ for $i=1,2$. Now we choose a totally imaginary quadratic CM extension $F'$ of $K$ which is unramified above $\mathcal{L}$, Galois over $\mathbb{Q}$ and linearly disjoint from $\overline{K}^{{\rm ker}\,\overline{r}_{i,\lambda|G_K}}$ over $K$ for $i=1,2$. Note that, for all $\lambda|\ell\in\mathcal{L}$ place of $M$, we have ${\rm SL}_{2}(\mathbb{F}_{\ell})\subset \overline{r}_{i,\lambda}(G_{K})=\overline{r}_{i,\lambda}(G_{F'})$, since ${\rm SL}_{2}(\mathbb{F}_{\ell})$ is perfect, it then follows that ${\rm SL}_{2}(\mathbb{F}_{\ell})\subset\overline{r}_{i,\lambda}(G_{F'(\zeta_{\ell})})$  where $\zeta_{\ell}$ is a primitive $\ell$-th root of $1$. Hence, for $\lambda|\ell\in \mathcal{L}$ the representations $\mathrm{Sym}^{n}\overline{r}_{i,\lambda|G_{F'(\zeta_{\ell})}}$ are irreducible for all $n\le \ell-1$. After removing finitely many primes from $\mathcal{L}$ one may then suppose that
\begin{itemize}
\item $\ell\in \mathcal{L}$ implies $\ell$ is unramified in $K$ and $\ell$ lies below no element of any set $S_i'$ of bad primes for $\mathcal{R}_{i|K}$.
\item If $v|\ell$ is a prime of $K$ and $\lambda|\ell$, each representation $\mathrm{Sym}^{n_i}r_{i,\lambda|G_K}$ is crystalline at $v$.
\item If $\ell\in \mathcal{L}$ then the Hodge-Tate multiset of each $\mathrm{Sym}^{n_i}r_{i,\lambda|G_K}$ equals to $\lbrace 0,\dots,n_i\rbrace$.
\item  For all $\lambda|\ell\in\mathcal{L}$, each $\mathrm{Sym}^{n_i}\overline{r}_{i,\lambda|G_{F'(\zeta_{\ell})}}$ is irreducible.
\end{itemize}
From \cite[Lemma 1.4.3]{BGGT14}, the former three points imply that $\mathrm{Sym}^{n_i}r_{i,\lambda|G_K}$ is potentially diagonalizable. Thus, the representations $\mathrm{Sym}^{n_i}r_{i,\lambda|G_K}$ satisfy the hypotheses of \cite[Theorem 4.5.1]{BGGT14} for any $\lambda|\ell\in\mathcal{L}$. So, there  exists a finite CM Galois extension $F''/F'$ over which the representations $\mathrm{Sym}^{n_i}r_{i,\lambda}$ become cuspidal automorphic. Let $L'$ be the maximal totally real subfield of $F''$. It may be deduced from \cite[Lemma 2.2.2]{BGGT14} that the representations $\mathrm{Sym}^{n_i}r_{i,\lambda}$ are cuspidal automorphic over $L'$, which achieve the proof of the theorem.
 
\end{proof} 
\subsection{Classification of the Sato-Tate group}\label{Sec1.3}
For the convenience of the reader we shall discuss briefly the classification of the Sato-Tate group in terms of the \emph{Galois type} of the endomorphism group after Fit\'e et al.  in \cite{FKRS} thus making our exposition self-contained. Let us first recall the definition of the Galois type of the endomorphism group.

Given a polarized abelian surface $A$ over a number field $F$. The Galois type associated to $A$ is the isomorphism class of the pair $[\mathrm{Gal}(L/F),\mathrm{End}(A_L)_{\mathbb{R}}]$ where $\mathrm{End}(A_L)_{\mathbb{R}}$ is the real endomorphism algebra $\mathrm{End}(A_L)\otimes\mathbb{R}$ and $L/F$ is the smallest field extension for which $\mathrm{End}(A_L)=\mathrm{End}(A_{\overline{\mathbb Q}})$, with $\mathrm{Gal}(L/F)$ acts on $\mathrm{End}(A_L)_{\mathbb{R}}$ via its action on the coefficients of the rational maps defining each element of $\mathrm{End}(A_L)$. 

The Albert's classification of division algebra with a positive involution together with Shimura's work \cite{Sh}, yield that the real endomorphism algebra of $A$ is one of the six possibilities displayed bellow. In \cite{FKRS} they showed that these six possibilities are in one-to-one correspondence with the six connected Lie subgroups of $\mathrm{USp}(4)$. Further, the Sato-Tate group of $A$ is uniquely determined by the Galois type and vice versa.
\begin{itemize}
\item[\textbf{(A)}]   $\mathrm{End}(A_L)_{\mathbb{R}}=\mathbb{R}$ : which is the generic case, corresponding to $ST_{A}^{0}=\mathrm{USp}(4)$.

\item[\textbf{(B)}]  $\mathrm{End}(A_L)_{\mathbb{R}}=\mathbb{R}\times\mathbb{R}$ : which occurs when $A_L$ is either isogenous to a product of non-isogenous elliptic curves without CM or simple with real multiplication, corresponding to $ST_{A}^{0}=\mathrm{SU}(2)\times\mathrm{SU}(2)$.

\item[\textbf{(C)}]  $\mathrm{End}(A_L)_{\mathbb{R}}=\mathbb{R}\times\mathbb{C}$ : which occurs when $A_L$ is isogenous to a product of two elliptic curves, one with CM and the other one without CM, corresponding to $ST_{A}^{0}=\mathrm{SU}(2)\times\mathrm{U}(1)$.
\item[\textbf{(D)}]  $\mathrm{End}(A_L)_{\mathbb{R}}=\mathbb{C}\times\mathbb{C}$ : which occurs when $A_L$ is either isogenous to a product of non-isogenous elliptic curves with CM or simple with CM by a quadratic field, corresponding to $ST_{A}^{0}=\mathrm{U}(1)\times\mathrm{U}(1)$.
\item[\textbf{(E)}]   $\mathrm{End}(A_L)_{\mathbb{R}}=M_2(\mathbb{R})$ : which occurs when $A_L$ is either isogenous to the square of an elliptic curve without CM or simple with QM (quaternionic multiplication), corresponding to $ST_{A}^{0}=\mathrm{SU}(2)$.
\item[\textbf{(F)}]   $\mathrm{End}(A_L)_{\mathbb{R}}=M_2(\mathbb{C})$ : which occurs when $A_L$ is isogenous to the square of an elliptic curve with CM, corresponding to $ST_{A}^{0}=\mathrm{U}(1)$.
\end{itemize}

On the other hand, they showed that there are a total of 52 distinct Galois types of an abelian surface over a number field, corresponding to 52 distinct Sato-Tate groups. Of these, 35 arise for abelian surfaces defined over a totally real number field and 13 occur only among abelian surfaces which are potentially of $\rm{GL}_2$-type in the sens of \thref{defGL} below, namely
\begin{itemize}
\item[$\bullet$] $\textbf{B}[C_1]$ and $\textbf{B}[C_2]$ corresponding to $ST_A=\rm{SU}(2)\times\rm{SU}(2)$ and $N(\rm{SU}(2)\times\rm{SU}(2))$,
\item[$\bullet$]$\textbf{C}[C_2]$ corresponding to $ST_A=N(\rm{SU}(2)\times\rm{U}(1))$,
\item[$\bullet$] $\textbf{E}[C_n]$ with $n=1,3,4,6$, and $\textbf{E}[C_2,\mathbb{R}\times\mathbb{R}]$ corresponding to $ST_A=E_1, E_3, E_4,E_6$ and $E_2$,
\item[$\bullet$] $\textbf{E}[C_2,\mathbb{C}]$ and $\textbf{E}[D_n]$ with $n=2,3,4,6$ corresponding to $ST_A=J(E_1), J(E_2), J(E_3), J(E_4)$ and $J(E_6)$. 
\end{itemize}
These are the only cases we will consider.
\section{Equidistribution results for abelian surfaces potentially of $\mathrm{GL}_2$-type}\label{Sec2}
In this section, we shall address \thref{CST} in the above displayed cases of abelian surfaces over a totally real number field. The approach adopted is essentially an adaptation of the framework from \cite{Johansson,Noah} and \cite{Wong} which is in turn inspired from \cite{Tay02,HSBT10,Harris} and \cite{Murty} respectively. We begin by recalling a non-standard definition \cite[Definition 18]{Johansson} of abelian surfaces of $\mathrm{GL}_2$-type and reviewing some standard facts on their rational Tate module.
\begin{dfn}\thlabel{defGL}
An abelian variety $A$ over a number field $F$ is termed of $\mathrm{GL}_2$-type if it is isogenous over $F$ to the product of simple abeilan varieties $A_1\times A_2\times\cdots\times A_r$ defined over $F$ each of them whose endomorphism algebras $\mathrm{End}^0_F(A_i)$ contain a number field $P_i$ of maximal dimension $\dim A_i$.
\end{dfn}
Let $A$ be an abelian surface over a number field $F$ potentially of $\mathrm{GL}_2$-type, that is, there exists a number field $P$ of degree $2$, an injection $P\hookrightarrow \mathrm{End}^0_{\overline{\mathbb{Q}}}(A)$, and we have two cases corresponding to whether $A$ is simple and $\mathrm{End}^0_{\overline{\mathbb{Q}}}(A)$ is a  real quadratic field, or $A$ is isogenous to a product of nonisogenous elliptic curves $E_1\times E_2$ and $\mathrm{End}^0_{\overline{\mathbb{Q}}}(A)=\mathbb{Q}\times\mathbb{Q}$.\\ 
When $A$ is simple, for any prime $\ell$, the Tate module $V_\ell(A)$ is free of rank two over $P_\ell=P\otimes_{\mathbb{Q}}\mathbb{Q}_{\ell}$. 
Thus the action of the absolute Galois group $G_{F}$ defines a two-dimensional Galois representation $\rho_{A,\lambda} : G_{F}\longrightarrow \mathrm{GL}(V_\lambda(A))$ where $V_\lambda(A)=V_\ell(A)\otimes_{P_\ell}  P_{\lambda}$ 
and $\lambda$ runs over all primes of $P$ dividing $\ell$.\\
We then have $\mathcal{R}=(V_\lambda(A))_\lambda$ a rank $2$ weakly compatible system of $\lambda$-adic representations of $G_{F}$ defined over $P$ whose exceptional set is the set of prime numbers at which $A$ has bad reduction, which is strongly irreducible and regular \cite[lem. 3.1-3]{Ribet}. 

\subsection{Abelian surfaces of type $\mathbf{B}$}
In this section we envisage the case of abelian surfaces of absolute type ${\bf B}$ and we prove the following equidistribution result.
\begin{pro}\thlabel{thm1}
Let $A$ be an abelian surface over a totally real number field $F$ of type $\mathbf{B}$ and let $K/F$ be a Galois extension of finite degree with Galois group $G\cong G_1\times G_2$, such that $G_1$ is abelian and $K^{G_1}/F$ is totally real, moreover, we assume that $L$ and $K$ are linearly disjoint over $F$, then \thref{CST} holds for $A$.
\end{pro}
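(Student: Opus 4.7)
The plan is to verify Serre's equidistribution criterion for $ST = ST_A \times \mathrm{Gal}(K/F)$: for every nontrivial $\rho = \tau \boxtimes \xi \in \mathrm{Irr}(ST)$, with $\tau \in \mathrm{Irr}(ST_A)$ and $\xi \in \mathrm{Irr}(\mathrm{Gal}(K/F))$, one must show that $L(s,\rho)$ is invertible on $\re(s) \geq 1$. After the extend-and-twist construction of Subsection~\ref{Sec1.2}, this is equivalent to invertibility of $L(s, M \otimes \xi)$ for a weakly compatible system $M$ of $\ell$-adic representations of $G_F$ fashioned from $\tau$ and $\rho_{A,\ell}$. When $A$ is of type $\mathbf{B}[C_2]$, the nontrivial element of $\mathrm{Gal}(L/F)$ swaps the two $\mathrm{SU}(2)$-factors of $ST_A^0$; combining Clifford theory with the linear disjointness $\mathrm{Gal}(KL/F) = \mathrm{Gal}(L/F) \times G$ lets me replace $F$ by $L$, $K$ by $KL$ and $\tau$ by a constituent $\mathrm{Sym}^{n_1} \boxtimes \mathrm{Sym}^{n_2}$ of $ST_A^0$. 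The hypothesis that $K^{G_1}/F$ is totally real survives this base change (as $L$ is totally real), and thereafter $M|_{G_F} = \mathrm{Sym}^{n_1}\mathcal{R}_1 \otimes \mathrm{Sym}^{n_2}\mathcal{R}_2$ up to a twist by a power of the cyclotomic character, where $\mathcal{R}_1, \mathcal{R}_2$ are the two rank-2 weakly compatible systems attached to the $\mathrm{GL}_2$-type structure of $A$.

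The second step is a Brauer induction that exploits the direct-product structure. Write $\xi = \xi_1 \boxtimes \xi_2$ with $\xi_1 \in \widehat{G_1}$ one-dimensional (as $G_1$ is abelian) and $\xi_2 \in \mathrm{Irr}(G_2)$. Brauer's theorem yields $\xi_2 = \sum_j a_j \mathrm{Ind}_{H_{j,2}}^{G_2}\chi_{j,2}$ with $H_{j,2} \leq G_2$ elementary (hence nilpotent) and each $\chi_{j,2}$ one-dimensional. Setting $\widetilde H_j := G_1 \times H_{j,2}$ and $F_j := K^{\widetilde H_j}$, Mackey together with the inductivity of Artin $L$-functions produces
\[
L(s, M \otimes \xi) \;=\; \prod_j L(s, M|_{G_{F_j}} \otimes \omega_j)^{a_j},
\]
where $\omega_j$ is the finite-order Hecke character of $F_j$ attached via class field theory to the one-dimensional character $\xi_1 \boxtimes \chi_{j,2}$ of $\mathrm{Gal}(K/F_j) = \widetilde H_j$. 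The inclusion $\widetilde H_j \supset G_1$ forces $F_j \subset K^{G_1}$, so $F_j$ is totally real, while $K/F_j$ is nilpotent, in particular solvable.

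Invertibility of each factor $L(s, M|_{G_{F_j}} \otimes \omega_j)$ is then obtained by combining \thref{Potaut} with a second Brauer decomposition, Arthur--Clozel solvable base change, and Rankin--Selberg. Applying \thref{Potaut} to $\mathcal{R}_1|_{F_j}, \mathcal{R}_2|_{F_j}$ over the totally real $F_j$ with auxiliary extension $K$ produces a totally real $L'_j$, Galois over $\mathbb{Q}$ and containing $K$, over which $\pi^{(i)}_j := \mathrm{Sym}^{n_i}\mathcal{R}_i|_{L'_j}$ is cuspidal automorphic on $\mathrm{GL}_{n_i+1}(\mathbb{A}_{L'_j})$. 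Decomposing the trivial character of $H := \mathrm{Gal}(L'_j/F_j)$ by Brauer as $\mathbf{1}_H = \sum_k b_k \mathrm{Ind}_{N_k}^{H}\nu_k$ with $N_k$ elementary and setting $E_k := (L'_j)^{N_k}$, the projection formula and inductivity yield
\[
L(s, M|_{G_{F_j}} \otimes \omega_j) \;=\; \prod_k L(s, M|_{G_{E_k}} \otimes \omega_j|_{E_k}\cdot\nu_k)^{b_k}.
\]
Each $L'_j/E_k$ is solvable Galois (group $N_k$), so Arthur--Clozel iterated cyclic base change identifies $\mathrm{Sym}^{n_i}\mathcal{R}_i|_{E_k}$ as cuspidal automorphic descended from $\pi^{(i)}_j$; Jacquet--Shalika together with Shahidi then deliver invertibility of the twisted Rankin--Selberg convolutions on $\re(s) \geq 1$, under the condition that $\mathrm{Sym}^{n_1}\mathcal{R}_1|_{E_k}$ and $\mathrm{Sym}^{n_2}\mathcal{R}_2|_{E_k}\otimes\omega_j|_{E_k}\nu_k$ are not mutually contragredient.

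The principal obstacle is precisely this non-self-duality condition at each Rankin--Selberg step. In the sub-case $A_L \sim E_1 \times E_2$ with $E_1, E_2$ non-isogenous non-CM elliptic curves, it follows from Faltings' isogeny theorem together with the non-triviality of $\rho$. In the real multiplication sub-case, where $\mathcal{R}_1, \mathcal{R}_2$ are $\mathrm{Gal}(P/\mathbb{Q})$-conjugate over the real quadratic multiplication field $P$, a twist equivalence would produce extra endomorphisms inconsistent with the type $\mathbf{B}$ classification. Tracking these verifications through the double Brauer reduction completes the argument.
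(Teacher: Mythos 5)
Your plan reproduces the paper's strategy for type $\mathbf{B}[C_1]$ and for the representations of $N(\mathrm{SU}(2)\times\mathrm{SU}(2))$ that are induced from the identity component: Serre's criterion, \thref{Potaut}, Brauer induction (you run it in two stages, first inside $G_2$ and then on the trivial character of $\mathrm{Gal}(L'_j/F_j)$, whereas the paper makes a single Brauer decomposition of $\chi$ inflated to $\mathrm{Gal}(L'/F)$ with nilpotent subgroups $H_i$ -- both organizations are legitimate), Arthur--Clozel solvable descent, Rankin--Selberg plus Shahidi, and Faltings resp.\ a twist/endomorphism argument for the non-contragredience. Two small inaccuracies in the set-up: $L$ need not be totally real in the $\mathbf{B}[C_2]$ case (it may be CM, which is exactly why \thref{Potaut} has a CM branch and why the paper tracks whether $(KL)^{G_1}$ is totally real or CM), and \thref{Potaut} produces an extension containing a prescribed totally real (or CM) field such as $K^{G_1}$, not $K$ itself; the latter is harmless for your argument but false as stated.

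The genuine gap is in your Clifford-theoretic reduction for $\mathbf{B}[C_2]$. Replacing $F$ by $L$, $K$ by $KL$ and $\tau$ by a constituent of $\tau|_{ST_A^0}$ is only valid when $\tau=\mathrm{Ind}_{ST_A^0}^{ST_A}\rho_{m,n}$ with $m\neq n$; but $ST_A=N(\mathrm{SU}(2)\times\mathrm{SU}(2))$ also has, for each $n$, two irreducible representations $\rho_n^1,\rho_n^2$ extending the self-conjugate $\mathrm{Sym}^n(\mathrm{St})\otimes\mathrm{Sym}^n(\mathrm{St})$ (and the quadratic character $\rho_0^2$ of $ST_A/ST_A^0$). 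For these, $L_F(s,\tau\otimes\xi)$ is \emph{not} the base-changed $L$-function of $\tau|_{ST_A^0}\otimes\xi$ over $L$; only the product $L(s,\rho_n^1\otimes\xi)\,L(s,\rho_n^2\otimes\xi)$ equals it (your reduction applied to $\rho_0^2$ would even replace an invertible twisted Hecke $L$-function by a Dedekind-zeta-type factor with a pole at $s=1$). Separating the two factors is precisely the hard point of this case: the paper shows the product is invertible by the Rankin--Selberg argument over $LL'^{H_i}$, and then proves each individual factor is non-vanishing on $\re(s)\ge 1$ by interpreting it as a (twisted) Asai $L$-function relative to the quadratic extension $LL'^{H_i}/L'^{H_i}$ via the Langlands--Shahidi method and invoking Grbac--Shahidi; non-vanishing of each factor plus invertibility of the product then yields invertibility of each. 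Your proposal contains no mechanism playing this role, so the representations $\rho_n^1\otimes\xi$, $\rho_n^2\otimes\xi$ (the case $n_1=n_2$ over $F$ rather than over $L$) are not covered, and Serre's criterion is not verified for all nontrivial irreducibles of $ST$.
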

\begin{proof}
According to \S 2.3 there are only  two cases to consider.\\
\textbf{Case 1. $A$ of Galois type $\mathbf{B}[C_1]$.}\\ 
As mentioned in \S 2.3 the corresponding  Chebotarev-Sato-Tate group of $A$ is $ST=ST_A\times G$ with $ST_{A}=\mathrm{SU}(2)\times\mathrm{SU}(2)$. Let $\rho_{m,n}$ be a nontrivial irreducible representation of $ST_{A}$ then it is of the form $\rho_{m,n}:=\mathrm{Sym}^m(\mathrm{St})\otimes\mathrm{Sym}^n(\mathrm{St})$ where $\mathrm{St}$ denotes the standard two-dimensional representation of $\mathrm{GL}(2)$, thus the nontrivial irreducible representations of the Chebotarev-Sato-Tate group $ST$ are of the form $\rho_{m,n}\otimes\xi$ with $\xi\in\mathrm{Irr}(G)$. One then extend $\rho_{m,n}$ to a representation $\tilde{\rho}_{m,n}$ of $G^{\mathrm{Zar}}_\ell$. To be specific, since $G^{\mathrm{Zar}}_\ell\subset \mathrm{GL}(2)\times\mathrm{GL}(2)$ we obtain $\tilde{\rho}_{m,n}$ by restricting $\mathrm{Sym}^m(\mathrm{St})\otimes\mathrm{Sym}^n(\mathrm{St})$ to $G^{\mathrm{Zar}}_\ell$. We distinguish the following two sub-cases.\\
\textbf{Sub-case 1: $P=\mathrm{End}^0_{\overline{\mathbb{Q}}}(A)=\mathrm{End}^0_{F}(A)=\mathbb{Q}\times \mathbb{Q}$}\\
In this case $A$ must be isogenous to the product $E_1\times E_2$ of two elliptic curves without CM that do not become isogenous over any finite extension of $F$. Furthermore,  the $\ell$-adic representation $\rho_{A,\ell}$ of $G_F$ attached to $A$ breaks up into a direct sum $\rho_{A,\ell}=\rho_{E_1,\ell}\oplus\rho_{E_2,\ell}$ of the two $\ell$-adic representations $\rho_{E_1,\ell}$ and $\rho_{E_2,\ell}$ of $G_F$ attached to the two elliptic curves $E_1$ and $E_2$ respectively. According to \S 2.2 we need to check that for every $\xi\in\mathrm{Irr}(G)$ the $L$-function
\begin{eqnarray*}
L(s,\rho_{m,n}\otimes\xi)
&=& L\left(s+\frac{m+n}{2},\tilde{\rho}_{m,n}\circ\rho_{A,\ell}\otimes\xi\right)\\
&=& L\left(s+\frac{m+n}{2},\mathrm{Sym}^m\rho_{E_1,\ell}\otimes\mathrm{Sym}^n\rho_{E_2,\ell}\otimes\xi\right)
\end{eqnarray*}
is invertible. To this end we shall follow closely the proof of \cite[Theorem 5.4]{Harris}. We look at the two weakly compatible systems of $\ell$-adic representations $(\rho_{E_1,\ell})_{\ell}$ and $(\rho_{E_2,\ell})_{\ell}$  of $G_F$ which are strongly irreducible (by virtue of Serre's open image theorem) , totally odd, essentially self-dual and with Hodge-Tate multiset $\lbrace 0,1\rbrace$. We may apply \thref{Potaut} to infer that there exists a totally real Galois extension $L'/F$ containing $K^{G_1}$ such that the two compatible systems $(\mathrm{Sym}^m\rho_{E_1,\ell})_{\ell}$ and $(\mathrm{Sym}^n\rho_{E_2,\ell})_{\ell}$ become cuspidal automorphic after restriction to $L'$. Let $\tilde{G}$ denote temporarily the Galois group of $L'/F$. Write $\xi=\chi\otimes\psi$ for $\chi\in\mathrm{Irr}(G_2)$, $\psi\in \mathrm{Irr}(G_1)$ and inflate $\chi$ to a character of $\tilde{G}$. By Brauer's induction theorem we have 
$$
\chi=\Bigoplus_i a_i\mathrm{Ind}_{H_i}^{\tilde{G}}\chi_i
$$
where the $a_i$ are integers and $\chi_i$ are one-dimensional characters of nilpotent subgroups $H_i$ of $\tilde{G}$. It follows that
\begin{multline*}
L\left(s+\frac{m+n}{2},\mathrm{Sym}^m\rho_{E_1,\ell}\otimes\mathrm{Sym}^n\rho_{E_2,\ell}\otimes\chi\otimes\psi\right)=\\ \prod_{i}L\left(s+\frac{m+n}{2},\mathrm{Sym}^m\rho_{E_1,\ell}\otimes\mathrm{Sym}^n\rho_{E_2,\ell}\otimes\psi\otimes\mathrm{Ind}_{H_{i}}^{\tilde{G}}\chi_i\right)^{a_i}.
\end{multline*}
On the other hand, Frobenius reciprocity gives
\begin{eqnarray*}
\mathrm{Sym}^m\rho_{E_1,\ell}\otimes\mathrm{Sym}^n\rho_{E_2,\ell}\otimes\psi\otimes\mathrm{Ind}_{H_i}^{\tilde{G}}\chi_i 
&\cong&\mathrm{Ind}_{H_i}^{\tilde{G}}((\mathrm{Sym}^m\rho_{E_1,\ell}\otimes\mathrm{Sym}^n\rho_{E_2,\ell}\otimes\psi)_{|L'^{H_{i}}}\otimes\chi_i)\\
&\cong&\mathrm{Ind}_{H_i}^{\tilde{G}}((\mathrm{Sym}^m\rho_{E_1,\ell})_{|L'^{H_{i}}}\otimes(\mathrm{Sym}^n\rho_{E_2,\ell})_{|L'^{H_{i}}}\otimes\psi_i)
\end{eqnarray*}
with $\psi_i:=\chi_i\otimes\psi_{|L'^{H_i}}$. By invariance of $L$-series under induction (see for instance \cite[Claim 1.1, pp.13]{Hida}), we get
\begin{multline*}
L\left(s+\frac{m+n}{2},\mathrm{Sym}^m\rho_{E_1,\ell}\otimes\mathrm{Sym}^n\rho_{E_2,\ell}\otimes\chi\otimes\psi\right)=\\ \prod_{i} L\left(s+\frac{m+n}{2},(\mathrm{Sym}^m\rho_{E_1,\ell})_{|L'^{H_i}}\otimes(\mathrm{Sym}^n\rho_{E_2,\ell})_{|L'^{H_i}}\otimes\psi_i\right)^{a_i}
\end{multline*}
Since the subgroups $H_i$ are nilpotent, we infer that the extensions $L'/L'^{H_i}$ are solvable, then one can find a chain of extensions 
$$
L'^{H_i}=L_{0}\subseteq L_{1} \subseteq\cdots\subseteq L_{k}=L'
$$
so that $L_{j+1}/L_{j}$ is cyclic for $0\le j\le k-1$. Then, upon using Arthur--Clozel's base change theorem for automorphic induction successively, in stages, to each of those cyclic extensions, we deduce that $(\mathrm{Sym}^m\rho_{E_1,\ell})_{|L'^{H_i}}$ and $(\mathrm{Sym}^n\rho_{E_2,\ell})_{|L'^{H_i}}$ are cuspidal automorphic over $L'^{H_i}$ . \\
We claim that $\psi_i=\chi_i\otimes\psi_{|L'^{H_i}}$ is a Hecke character of $L'^{H_i}$. In fact, on one hand our hypothesis that $G_1$ is abelian implies that $\psi$  is linear. So, we may regard it via Artin reciprocity as a Hecke character of $F$. Hence, by the base change of $\mathrm{GL}(1)$, $\psi_{|L'^{H_i}}$ is a Hecke character of $L'^{H_i}$. On the other hand, again via Artin reciprocity each of the linear characters $\chi_i$ can be thought of as a Hecke character of $L'^{H_i}$, and so also is $\psi_i$. Therefore, it follows from the functoriality of $\mathrm{GL}(n)\times\mathrm{GL}(1)$ that $(\mathrm{Sym}^n\rho_{E_2,\ell})_{|L'^{H_i}}\otimes\psi_i$ is cuspidal automorphic over $L'^{H_i}$. By the Rankin-Selberg theory ( see for instance \cite[Theorem 9.2, pp. 69]{CKM}) and Shahidi's non-vanishing result \cite[Theorem 5.2]{Shahidi} we deduce that the $L$-function
$$
 L\left(s+\frac{m+n}{2},(\mathrm{Sym}^m\rho_{E_1,\ell})_{|L'^{H_i}}\otimes(\mathrm{Sym}^n\rho_{E_2,\ell})_{|L'^{H_i}}\otimes\psi_i\right)
$$
is invertible if and only if
\begin{equation}
(\mathrm{Sym}^n\rho_{E_2,\ell})_{|L'^{H_i}}\otimes\psi_i\ncong(\mathrm{Sym}^m\rho_{E_1,\ell}^{\vee})_{|L'^{H_i}},\label{eq1}
\end{equation} 
which always holds when $m\neq n$ for dimensional reasons. However, when $m=n$, since $\rho_{E_j,\ell}^{\vee}\cong \rho_{E_j,\ell}\otimes\epsilon_\ell^{-1}$ for $j=1,2$ where $\epsilon_\ell$ stands for the $\ell$-adic cyclotomic character, \eqref{eq1} becomes
$$
(\mathrm{Sym}^n\rho_{E_2,\ell})_{|L'^{H_i}}\otimes\psi_i\ncong(\mathrm{Sym}^n\rho_{E_1,\ell})_{|L'^{H_i}}.
$$
For the sake of contradiction, we assume the contrary of the above displayed formula. By class field theory one may see $\psi_i$ as a Galois character $\mathrm{Gal}(\overline{F}/L'^{H_i})\to\mathbb{C}^{\times}$. Accordingly, there exists a finite extension $N_i/L'^{H_i}$ such that $\psi_{i}|_{\mathrm{Gal}(\overline{F}/N_i)}$ is trivial. Hence 
$$
(\mathrm{Sym}^n\rho_{E_2,\ell})_{|N_i}\cong(\mathrm{Sym}^n\rho_{E_1,\ell})_{|N_i}.
$$
It then follows that the corresponding projective representation of $\rho_{E_2,\ell|N_i}$ would be isomorphic to the projectivizations of $\rho_{E_2,\ell|N_i}$. Hence, there exist a character $\eta : G_{N_i}\longrightarrow\overline{\mathbb{Q}}_\ell^{\times}$ such that $\rho_{E_1,\ell|N_i}\cong\rho_{E_2,\ell|N_i}\otimes\eta$. As $\rho_{E_1,\ell|N_i}$ and $\rho_{E_2,\ell|N_i}$ are de Rham at all places dividing $\ell$ and have the same Hodge-Tate weights (namely $0$ and $1$), we infer that $\eta$ must be of Hodge-Tate weights $0$, thereby, it has finite order. Thus, we get a finite extension $N'_i$ of $N_i$ over which $\rho_{E_1,\ell}$ and $\rho_{E_2,\ell}$  become isomorphic. Consequently, by Flatings theorem we see that $E_1$ and $E_2$ must be isogenous over $N'_i$, which is a contradiction.\\
\textbf{Sub-case 2. $P=\mathrm{End}^0_{\overline{\mathbb{Q}}}(A)=\mathrm{End}^0_{F}(A)$ real quadratic field}\\ 
In this case $A$ is simple. By means of the two embeddings $\lambda_1$ and $\lambda_2$ of  $P\hookrightarrow\overline{\mathbb{Q}_\ell}$ we get the following decomposition $\rho_{A,\ell}=\rho_{A,\lambda_1}\oplus \rho_{A,\lambda_2}$, where the two $\lambda$-adic representations $\rho_{A,\lambda_1}$ and $\rho_{A,\lambda_2}$ of $G_F$ do not become isomorphic over any further extension of $F$. We thus get the rank $2$ weakly compatible system $(\rho_{A,\lambda})_{\lambda}$ of $\lambda$-adic representations of $G_F$ over $P$ which is strongly irreducible (because the image of $\rho_{A,\lambda}$ is open in its Zariski closure $G^{{\rm Zar}}_{\lambda}={\rm GL}_{2}(\overline{\mathbb{Q}}_{\ell})$), totally odd, essentially self-dual and with Hodge-Tate multiset $\{0,1\}$. For every $n,m\ge 0$, one can then apply \thref{Potaut} to get a totally real Galois extension $L'/F$ containing $K^{G_1}$ over which the two weakly compatible systems $(\mathrm{Sym}^n\rho_{A,\lambda})_{\lambda}$ and $(\mathrm{Sym}^m\rho_{A,\lambda})_{\lambda}$ become cuspidal automorphic. So, the invertibility of the $L$-functions
\begin{eqnarray*}
L(s,\rho_{m,n}\otimes\xi)
&=& L\left(s+\frac{m+n}{2},\tilde{\rho}_{m,n}\circ\rho_{A,\ell}\otimes\xi\right)\\
&=& L\left(s+\frac{m+n}{2},\mathrm{Sym}^m\rho_{A,\lambda_2}\otimes\mathrm{Sym}^n\rho_{A,\lambda_1}\otimes\xi\right)
\end{eqnarray*}
for every $\xi\in\mathrm{Irr}(G)$, can be proved along with the arguments of the previous sub-case (see the proof of \cite[Proposition 22]{Johansson}). \\
\textbf{Case 2. $A$ of Galois type $\textbf{B}[C_2]$.}\\
Notice that, for this Galois type we have $\mathrm{End}^0_{F}(A)=\mathbb{Q}$, and there are two sub-cases corresponding to whether $A_{L}$ is simple and $P=\mathrm{End}^0_{\overline{\mathbb{Q}}}(A)=\mathrm{End}^0_{L}(A)$ is a real quadratic field or  $A_{L}$ is isogenous to a direct sum of nonisogenous elliptic curves, each without CM, and $P=\mathrm{End}^0_{\overline{\mathbb{Q}}}(A)=\mathrm{End}^0_{L}(A)=\mathbb{Q}\times\mathbb{Q}$. In what follow we tackle only the former sub-case, as the second one follows the same pattern of the proof of sub-case 1 of the preceding case.\\
Now, we list all the irreducible representations of the Chebotarev-Sato-Tate group $ST$ in this case. Here the identity connected component $ST_A^0$ of the Sato-Tate group is $\mathrm{SU}(2)\times\mathrm{SU}(2)$ which is an index $2$ subgroup of the Sato-Tate group $ST_A=N(\mathrm{SU}(2)\times\mathrm{SU}(2))=\left<\mathrm{SU}(2)\times\mathrm{SU}(2),J\right>$. We shall build up irreducible representations of $ST_A$ from those of $ST_A^0$ which are of the form $\rho_{n,m}:=\mathrm{Sym}^n(\mathrm{St})\otimes \mathrm{Sym}^m(\mathrm{St})$. Since $J_0=\begin{pmatrix}
0 & 1 \\
-1 & 0
\end{pmatrix}\in\mathrm{SU}(2)$, and for every $(A,B)\in ST^0_A$, we have $J(A,B)J^{-1}=(J_0BJ_0^{-1},J_0AJ_0^{-1})$, where $J=\begin{pmatrix}
0 & J_0 \\
-J_0 & 0
\end{pmatrix}$, it follows that $\rho_{m,n}^{J}\cong\rho_{n,m}$. On account of \cite[Lemma 23]{Johansson} we see that $\rho_{n,m}$ extends to two representations $\rho^1_n$, $\rho^2_n$ of $ST_A$ when $n=m$, and for $n\neq m$ the irreducible representations of $ST_A$ should be of the form $\mathrm{Ind}^{ST_A}_{ST_A^0}\rho_{m,n}$. Conventionally we put $\rho_0^1$ as the trivial representation. It then follows that the irreducible nontrivial representations of the Chebotarev-Sato-Tate group $ST$ are $$\mathrm{Ind}^{ST_A}_{ST_A^0}\rho_{m,n}\otimes\xi\;\text{for}\; n>m\ge0,\quad \rho^1_n\otimes\xi \; \text{and}\; \rho^2_n\otimes\xi \;\text{for}\; n\ge 0$$
where $\xi\in\mathrm{Irr}(G)$. As alluded to in \S 2.2, we need to check that the $L$-functions associated to nontrivial irreducible representations of $ST$ are inversible. 

Let us first look at the representations $\mathrm{Ind}^{ST_A}_{ST_A^0}\rho_{m,n}\otimes\xi$. In order to prove anything about their $L$-functions we need to rewrite them  into a convenient form. Let $g'_{\mathfrak{P}}$ denote the normalized image of Frobenius for a prime $\mathfrak{P}$ in $G_L$, then by the interaction of $L$-functions with induction and linear disjointness of $L$ and $K$ (which allows to make the following identification $G=\mathrm{Gal}(KL/L)$) we have 
\begin{eqnarray}\label{eq2}
\nonumber L\left(s,\mathrm{Ind}^{ST_A}_{ST_A^0}\rho_{m,n}\otimes\xi\right)
&=&\prod_{\mathfrak{p}} \det\left(1-\mathrm{Ind}^{ST_A}_{ST_A^0}\rho_{m,n}(g_{\mathfrak{p}})\otimes\xi(\sigma_\mathfrak{p})q^{-s}_{\mathfrak{p}}\right)^{-1},\\ \nonumber
 &=&\prod_{\mathfrak{P}} \det\left(1-\rho_{m,n}(g'_{\mathfrak{P}})\otimes\xi(\sigma'_\mathfrak{P})q^{-s}_{\mathfrak{P}}\right)^{-1},\\ 
&=&L(s,\rho_{m,n}\otimes\xi),
\end{eqnarray}
where $\sigma'_\mathfrak{P}:= \left(\frac{KL/L}{\mathfrak{P}}\right)$ the Artin symbol relative to the extension $KL/L$. Now, we extend $\rho_{m,n}$ to a representation $\tilde{\rho}_{m,n}$ of $G^{\mathrm{Zar}}_{\ell}(A_L)$ the algebraic group coming from $G_L$. Since $G^{\mathrm{Zar}}_{\ell}(A_L)\subset\mathrm{GL}(2)\times \mathrm{GL}(2)$ we make it explicitly by restricting $\mathrm{Sym}^m(\mathrm{St})\otimes\mathrm{Sym}^n(\mathrm{St})$ from $\mathrm{GL}(2)\times \mathrm{GL}(2)$ to $G^{\mathrm{Zar}}_{\ell}(A_L)$. Such as, we get a Galois representation of $G_L$, namely $\tilde{\rho}_{m,n}\circ\rho_{A_L,\ell}$, and \eqref{eq2} becomes
$$L(s,\rho_{m,n}\otimes\xi)=L\left(s+\frac{n+m}{2},\tilde{\rho}_{m,n}\circ\rho_{A_L,\ell}\otimes\xi\right).$$
On the other hand, $\rho_{A_L,\ell}$ split up as sum of two two-dimensional Galois representations $\rho_{A_L,\lambda_1}$ and $\rho_{A_L,\lambda_2}$, that do not become isomorphic over any further extension of $F$, where $\lambda_1$ and $\lambda_2$ denote the two embeddings of $P=\mathrm{End}^0_L(A)$. Hence, we have the following decomposition of the $L$-function
$$L(s,\rho_{m,n}\otimes\xi)=L\left(s+\frac{n+m}{2},\mathrm{Sym}^m\rho_{A_L,\lambda_1}\otimes\mathrm{Sym}^n\rho_{A_L,\lambda_2}\otimes\xi\right).$$
Notice that $L$ can be either a totally real or a CM field. Thus we get the weakly compatible system $(\rho_{A_L,\lambda})_{\lambda}$ of $\lambda$-adic representations of $G_L$ over $P$, which is strongly irreducible (as the image of $\rho_{A_L,\lambda}$ is open in its Zariski closure $G^{{\rm Zar}}_\lambda={\rm GL}_2(\overline{\mathbb{Q}}_\ell)$), totally odd, essentially self-dual and with Hodge-Tate multiset $\{0,1\}$. We infer from \thref{Potaut} that there exists a finite Galois extension $L'/L$ which contains $(KL)^{G_1}$ (which is either a totally real or a CM field according to $L$) so that the two weakly compatible systems $(\mathrm{Sym}^m\rho_{A_L,\lambda})_{\lambda}$ and $(\mathrm{Sym}^n\rho_{A_L,\lambda})_{\lambda}$ become automorphic and cuspidal over $L'$. Set $\tilde{G}:=\mathrm{Gal}(L'/L)$ (temporarily), and write $\xi=\chi\otimes\psi$ with $\chi\in\mathrm{Irr}(G_2)$ and $\psi\in\mathrm{Irr}(G_1)$. As in the previous case, since $L'$ contains $(KL)^{G_1}$, $\chi$ can be seen as a character of $\tilde{G}$ and by Brauer's induction theorem one may write $\chi=\Bigoplus_{i} a_i\mathrm{Ind}_{H_i}^{\tilde{G}}\chi_i$, for some integers $a_i$ and some one-dimensional characters $\chi_i$ of nilpotent subgroups $H_i$ of $\tilde{G}$. Following the steps of the previous case {\it mutatis mutandi} we see that it turns out to prove that $$
(\mathrm{Sym}^n\rho_{A_{L},\lambda_2})_{|L'^{H_i}}\otimes\psi_i\ncong(\mathrm{Sym}^m\rho_{A_{L},\lambda_1})_{|L'^{H_i}}.
$$
Since $m\neq n$ (from irreducibility) then a dimensional argument shows that the above displayed formula holds.

Next, we deal with the representations of the form $\rho^{k}_{n} \otimes  \xi$ with $n\ge 1$ and $k\in\{1,2\}$. Extending $\rho_{n}^k$ to an algebraic representation $\tilde{\rho}^{k}_{n}$ of $G_{\ell}^{\text{Zar}}\subset \left<\mathrm{GL}(2)\times \mathrm{GL}(2),J\right>$ by restricting $\mathrm{Sym}^n(\mathrm{St})\otimes\mathrm{Sym}^n(\mathrm{St})$ and leaving the image of $J$ alone. Then we have
\begin{equation}\label{eq3}
L\left(s,\rho^{k}_{n}\otimes\xi\right)=L\left(s+n,\tilde{\rho}^{k}_{n}\circ\rho_{A,\ell}\otimes\xi\right).
\end{equation}
As before we write $\xi=\psi\otimes\chi$ with $\chi\in\mathrm{Irr}(G_2)$ and $\psi\in\mathrm{Irr}(G_1)$. Let $L'$ be the field from the previous paragraph which is allowed to be Galois over $F$ (since it is so over $\mathbb{Q}$). Our choice of $L'$ to be containing $(LK)^{G_1}$ enables us to inflate $\chi$ to a character of $\tilde{G}:=\mathrm{Gal}(L'/F)$. By Brauer's theorem we have $\chi=\Bigoplus_i a_i\mathrm{Ind}_{H_i}^{\tilde{G}}\chi_i$.  Therefore, the invisibility of the $L$-function \eqref{eq3} follows from that of
\begin{equation}\label{eq4}
L\left(s+n,(\tilde{\rho}^{k}_{n}\circ\rho_{A,\ell})_{|L'^{H_i}}\otimes\psi_{i}\right).
\end{equation}
So two situations may occur, either $L\subseteq L'^{H_i}$ or $L\nsubseteq L'^{H_i}$. In the former case \eqref{eq4} equals 
$$L\left(s+n,(\mathrm{Sym}^n\rho_{A_{L},\lambda_1})_{|L'^{H_i}}\otimes(\mathrm{Sym}^n\rho_{A_{L},\lambda_2})_{|L'^{H_i}}\otimes\psi_i\right)$$ 
to which the same types of arguments as those in the paragraph after \eqref{eq1} apply, yielding its  invertibility.\\
In the case  when $L\nsubseteq L'^{H_i}:=L_{i}$, we consider $L_i'=LL_i$ which is a quadratic CM extension of $L_{i}$. We then have
$$
{\rm Ind}_{G_{L'_{i}}}^{G_{L_{i}}}(\tilde{\rho}^{1}_{n}\circ\rho_{A,\ell})\otimes\psi_i)_{|L'_{i}}\cong  (\tilde{\rho}^{1}_{n}\circ\rho_{A,\ell})_{|L_{i}}\otimes\psi_i\oplus ((\tilde{\rho}^{1}_{n}\circ\rho_{A,\ell})_{|L_{i}}\otimes\psi_i)^{\delta}
$$
where $\delta$ is the non-trivial character of  $\mathrm{Gal}(L/F)$. Taking into account that $(\tilde{\rho}^{1}_{n}\circ\rho_{A,\ell})_{|L_i}\otimes\delta=(\tilde{\rho}^{2}_{n}\circ\rho_{A,\ell})_{|L_i}$, we get the following identity 
$$
L\left(s+n,(\tilde{\rho}^{1}_{n}\circ\rho_{A,\ell})_{|L_i}\otimes\psi_{i}\right)L\left(s+n,(\tilde{\rho}^{2}_{n}\circ\rho_{A,\ell})_{|L_i}\otimes\psi_{i}\right)=L\left(s+n,(\tilde{\rho}^{k}_{n}\circ\rho_{A,\ell})_{|L'_{i}}\otimes\psi_{i|L'_{i}}\right)
$$
Since $L\subset L'_{i}$ we have $(\tilde{\rho}^{k}_{n}\circ\rho_{A,\ell})_{|L'_{i}}=(\mathrm{Sym}^n\rho_{A_L,\lambda_1})_{|L'_{i}}\otimes(\mathrm{Sym}^n\rho_{A_{L},\lambda_2})_{|L'_{i}}$. On the other hand by Arthur--Clozel's cyclic base change we see that the weakly compatible system $(\mathrm{Sym}^n\rho_{A_L,\lambda})_{\lambda}$ is cuspidal automorphic over $L_i'$. It thus follows from the same types of arguments as those in the paragraph just after \eqref{eq1}, that the $L$-function in the right-hand side of the above factorization, is invertible. Accordingly, it suffices to show the non-vanishing of the $L$-functions \eqref{eq4} on $\re(s)\ge 1$ to deduce their invertibility. Interpreting them as the Asai/twisted Asai $L$-function relative to the quadratic extension $L'_i/L_i$ associated to the automorphic representation, coming from $(\mathrm{Sym}^n\rho_{A_L,\lambda}\otimes\psi_i)_{\lambda}$, via the Langlands-Shahidi method. It may be concluded  from \cite[Theorem 4.3]{GS15} that these $L$-functions are non-vanishing on $\re(s)\ge 1$, as desired.

Finally, we investigate the inversibility of the $L$-functions associated to $\rho_{0}^{k}\otimes\xi$ with $k\in\{1,2\}$. Let's start with $L(s,\rho_{0}^{2}\otimes\xi)$. The representation $\rho_{0}^{2}$ is trivial on $ST^{0}_{A}$, so it can be thought of as a representation of $ST_{A}/ST^{0}_{A}\cong \mathrm{Gal}(L/F)$. Therefore, $L(s,\rho_{0}^{2}\otimes\xi)$ is the twisted Hecke $L$-function associated with the nontrivial character $\delta$ of $\mathrm{Gal}(L/F)$. Writing $\xi=\psi\otimes\chi$ with $\chi\in\mathrm{Irr}(G_2)$ and $\psi\in\mathrm{Irr}(G_1)$. Brauer's induction theorem gives the following decomposition $\chi=\Bigoplus_i a_i\mathrm{Ind}_{H_i}^{G}\chi_i$ for some integers $a_i$ and one-dimensional characters $\chi_i$ of nilpotent subgroups $H_i$ of $G$. It follows that
$$
L(s,\rho_{0}^{2}\otimes\xi)=L(s,\delta\otimes\xi)=\prod_{i} L(s,\delta_{|K^{H_i}}\otimes\psi_i)^{a_i}\quad\text{with}\quad \psi_i:=\psi_{|K^{H_i}}\otimes\chi_i.
$$
As we have seen, $\psi_i$ may be regarded as a Hecke character of $K^{H_i}$. Thereby, $L(s,\delta_{|K^{H_i}}\otimes\psi_i)$ is inversible, and also is $L(s,\rho_{0}^{2}\otimes\xi)$. The inversibility of the $L$-function $L(s,\rho_{0}^{1}\otimes\xi)$ with $\xi$ nontrivial, may be proved similarly using Brauer's induction theorem. Which achieve the proof of the proposition.
\end{proof}
\subsection{Abelian surfaces of type $\mathbf{C}$}
In this section we shall investigate the case of abelian surfaces of absolute type $\mathbf{C}$. In fact, we prove the following equidistribution result.
\begin{pro}\thlabel{thm2}
Let $A$ be an abelian surface over a totally real number field $F$ of type $\mathbf{C}$ and let $K/F$ be a Galois extension of finite degree with Galois group $G\cong G_1\times G_2$, such that $G_1$ is abelian and $K^{G_1}/F$ is totally real, moreover, we assume that $L$ and $K$ are linearly disjoint over $F$, then \thref{CST} holds for $A$.
\end{pro}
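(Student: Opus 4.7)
The plan is to follow the template of Proposition \thref{thm1} by replacing the second $\mathrm{SU}(2)$ factor with the $\mathrm{U}(1)$ arising from the CM elliptic curve half of $A_L$. First, I would enumerate the irreducible representations of the Chebotarev-Sato-Tate group $ST=ST_A\times G$. Since $ST_A^0=\mathrm{SU}(2)\times\mathrm{U}(1)$ sits with index $2$ in $ST_A=N(\mathrm{SU}(2)\times\mathrm{U}(1))$ and the nontrivial coset acts on the $\mathrm{U}(1)$ factor by $z\mapsto z^{-1}$, the irreducible representations $\rho_{m,n}:=\mathrm{Sym}^m(\mathrm{St})\otimes\chi_n$ of $ST_A^0$ satisfy $\rho_{m,n}^J\cong\rho_{m,-n}$. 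By \cite[Lemma 23]{Johansson}, they extend to two representations $\rho_m^1,\rho_m^2$ of $ST_A$ when $n=0$, while for $n>0$ the induced representation $\mathrm{Ind}^{ST_A}_{ST_A^0}\rho_{m,n}$ is irreducible. Thus the nontrivial irreducible representations of $ST$ are $\mathrm{Ind}^{ST_A}_{ST_A^0}\rho_{m,n}\otimes\xi$ for $m\ge 0$, $n>0$, together with $\rho_m^k\otimes\xi$ for $m\ge 0$, $k\in\{1,2\}$, where $\xi\in\mathrm{Irr}(G)$.

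For the induced representations, the linear-disjointness argument of \eqref{eq2} reduces their $L$-function to $L(s,\rho_{m,n}\otimes\xi)$ over $L$ via the identification $\mathrm{Gal}(KL/L)\cong G$. Over $L$ one has the decomposition $A_L\sim E_1\times E_2$ with $E_1$ a CM elliptic curve and $E_2$ without CM, so $\rho_{A_L,\ell}=\rho_{E_1,\ell}\oplus\rho_{E_2,\ell}$ and, after base change of scalars to $\overline{\mathbb{Q}}_\ell$, $\rho_{E_1,\ell}|_{G_L}\cong\psi\oplus\psi^{c}$ for an $\ell$-adic Hecke Gr\"ossencharacter $\psi:G_L\to\overline{\mathbb{Q}}_\ell^{\times}$. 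Extending $\rho_{m,n}$ algebraically to $G_\ell^{\mathrm{Zar}}(A_L)$, the character $\chi_n$ corresponds to $\psi^n$, so the associated $L$-function becomes
$$
L(s,\rho_{m,n}\otimes\xi)=L\bigl(s+(m+n)/2,\;\mathrm{Sym}^m\rho_{E_2,\ell}\otimes\psi^n\otimes\xi\bigr).
$$

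I would then apply \thref{Potaut} to the rank-$2$ strongly irreducible, essentially self-dual, Hodge-Tate regular system $(\rho_{E_2,\lambda})_\lambda|_{G_L}$ (paired with itself, with $n_1=m$ and $n_2=1$) to obtain a CM or totally real Galois extension $L'/L$ containing $(KL)^{G_1}$ over which $\mathrm{Sym}^m\rho_{E_2,\ell}$ is cuspidal automorphic. Writing $\xi=\chi\otimes\eta$ with $\chi\in\mathrm{Irr}(G_2)$, $\eta\in\mathrm{Irr}(G_1)$, inflating $\chi$ to $\mathrm{Gal}(L'/L)$, and decomposing it by Brauer's theorem into a virtual sum of characters induced from nilpotent subgroups $H_i$, the reasoning of Sub-case~1 of Proposition \thref{thm1} then applies \emph{mutatis mutandis}: Arthur--Clozel solvable base change preserves the cuspidality of $\mathrm{Sym}^m\rho_{E_2,\ell}$ on each $L'^{H_i}$, and since $\psi^n$ as well as $\eta_i:=\chi_i\otimes\eta|_{L'^{H_i}}$ are Hecke characters of $L'^{H_i}$, $\mathrm{GL}(m+1)\times\mathrm{GL}(1)$ functoriality delivers a cuspidal automorphic representation whose $L$-function is invertible by Rankin-Selberg theory combined with Shahidi's non-vanishing theorem, the required non-duality condition being automatic by a dimension count.

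Finally, for the representations $\rho_m^k\otimes\xi$, the triviality of the $\mathrm{U}(1)$ factor reduces matters over $F$ to $L$-functions of the form $L\bigl(s+m/2,\,\mathrm{Sym}^m\rho_{E_2,\ell}\otimes\delta_k\otimes\xi\bigr)$ with $\delta_k$ a character of $\mathrm{Gal}(L/F)$, and the argument of Case~2 of Proposition \thref{thm1} carries over. The main obstacle, as in the $\mathbf{B}[C_2]$ treatment, is the sub-case $L\not\subset L'^{H_i}$ arising after Brauer decomposition over $F$: I would pass to the quadratic CM extension $L'_i:=LL'^{H_i}$, factor the product $L(s,\rho_m^1\otimes\xi)L(s,\rho_m^2\otimes\xi)$ through $L'_i$, and interpret each factor as an Asai or twisted Asai $L$-function relative to $L'_i/L'^{H_i}$, invoking \cite[Theorem 4.3]{GS15} to conclude non-vanishing on $\re(s)\ge 1$. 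The residual $\rho_0^k\otimes\xi$ cases reduce to abelian Hecke $L$-functions twisted by the characters of $G$ and follow from class field theory combined with Brauer induction, as in the closing paragraph of Proposition \thref{thm1}.
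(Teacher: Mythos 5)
Your overall template is right, and your classification of the irreducible representations of $ST$, the reduction to $L$ via the $\mathrm{Gal}(KL/L)\cong G$ identification, the decomposition $\rho_{E_1,\ell}|_{G_L}\cong\psi\oplus\psi^c$, and the application of \thref{Potaut} followed by Brauer plus Arthur--Clozel are all in the same spirit as the paper's argument. However, you import two pieces of machinery from the $\mathbf{B}$-case that the paper correctly avoids, and both signal a structural misreading of type $\mathbf{C}$.

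First, for the $\mathrm{Ind}^{ST_A}_{ST_A^0}\rho_{m,n}\otimes\xi$ representations, you appeal to ``Rankin--Selberg theory combined with Shahidi's non-vanishing theorem, the required non-duality condition being automatic by a dimension count.'' But here, unlike in $\mathbf{B}$, the second tensor factor $\psi^n$ is not a rank-$2$ compatible system --- it is a Hecke character. After twisting, $\mathrm{Sym}^m\rho_{E_2,\ell}|_{L'^{H_i}}\otimes\psi^n\eta_i$ is a \emph{single} cuspidal automorphic representation of $\mathrm{GL}(m+1)$, and its standard $L$-function is invertible by Jacquet--Shalika without any Rankin--Selberg pairing or non-duality check. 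The $m\ne n$ dimension count from $\mathbf{B}[C_1]$ has no analogue here because there is only one symmetric-power factor.

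Second, and more substantively, for the representations $\rho_m^k\otimes\xi$ you carry over the Asai / twisted-Asai step from the $\mathbf{B}[C_2]$ case, saying the sub-case $L\not\subset L'^{H_i}$ is ``the main obstacle.'' It is not an obstacle in type $\mathbf{C}$: the Asai step is needed in $\mathbf{B}[C_2]$ precisely because $\rho_{A,\ell}$ only splits over $L$, so $\tilde\rho_n^k\circ\rho_{A,\ell}$ does not factor as a tensor product of $G_F$-representations. In type $\mathbf{C}$, $A$ is already isogenous over $F$ to $E_1\times E_2$ with $E_2$ non-CM defined over $F$ (FKRS, Prop.\ 4.5(ii)), so $\tilde\rho_m^k\circ\rho_{A,\ell}=\delta^k\otimes\mathrm{Sym}^m\rho_{E_2,\ell}$ is a genuine $G_F$-representation with $\delta$ merely the nontrivial character of $\mathrm{Gal}(L/F)$. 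One applies \thref{Potaut} to $(\rho_{E_2,\ell})_\ell$ over the totally real field $F$ directly, obtains a totally real $L'/F$ containing $K^{G_1}$, and the Brauer argument over $F$ goes through with no reference to $L$, no quadratic base change $LL'^{H_i}$, and no Asai $L$-function. This is exactly what the paper does, and it is strictly simpler. Your proof is not wrong in its conclusion, but the Asai detour is unnecessary and indicates you did not notice that the key structural complication of $\mathbf{B}[C_2]$ --- a rank-$4$ representation that is irreducible over $F$ and only splits over $L$ --- simply does not arise for $\mathbf{C}$.
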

\begin{proof}
By virtue of \cite{FKRS}[(ii) of Proposition 4.5] $A$ is isogenous over $F$ to a product $E_1\times E_2$ of (necessarily nonisogenous) elliptic curves, one with CM defined over $L$, say $E_1$, and the other without CM, say $E_2$.  The only Galois type which can arises is $\mathbf{C}[C_2]$, because $F$ is totally real (cf.,\cite[\S 4.7, pp 1419]{FKRS}).\\
The Chebotarev-Sato-Tate group is given by $ST=ST_A\times G$ with $ST_A=N(\mathrm{U}(1))\times\mathrm{SU}(2)$ and $N(\mathrm{U}(1))$ is the normalizer of $\mathrm{U}(1)$ in $\mathrm{SU}(2)$. Let us first classify all irreducible non-trivial representations of $ST$. Recall that $\mathrm{U}(1)$ is a subgroup of $N(\mathrm{U}(1))$ of index two, $N(\mathrm{U}(1))=\mathrm{U}(1)\sqcup J_{0} \mathrm{U}(1)$ where  $J_0=\begin{pmatrix}
0 & 1 \\
-1 & 0
\end{pmatrix}$, and the homomorphisms
$$
\phi_m : \begin{array}[t]{lll}
               \mathrm{U}(1) &\longrightarrow & \mathrm{GL}_1(\mathbb{C}) \\
               \begin{pmatrix}
          u   & 0\\
          0   &  \bar{u}  
\end{pmatrix} &\longmapsto &  u^m \\
               \end{array}
$$
with $m\in \mathbb{Z}$ form the complete list of irreducible representations of $\mathrm{U}(1)$. Note, indeed, that for $D\in \mathrm{U}(1)$
$$
\phi_m^{J_0}(D)=\phi_m(J_0 D J_0^{-1})=\phi_m(D^{-1})=\phi_{-m}(D).
$$
It follows that $\phi_m^{J_0}\cong \phi_{-m}$. Accordingly, by \cite[Lemma 23]{Johansson}, for $m\in \mathbb{Z}\setminus\{0\}$ the irreducible representations of $N(\mathrm{U}(1))$ are of the form $\mathrm{Ind}^{N(\mathrm{U}(1))}_{\mathrm{U}(1)}\phi_m$ and $\phi_0$ extends to two representations $\rho_0^0$ and $\rho_0^1$, where we take $\rho_0^0$ to be the trivial representation. Hence, the complete list of irreducible representations of $ST$ is
$$
\rho_{m,n}\otimes\xi \quad\text{and}\quad\rho_0^\varepsilon\otimes \mathrm{Sym}^n(\mathrm{St})\otimes\xi, \quad\text{for}\quad \varepsilon\in\{0,1\}, \quad \text{ $m\ge 1$,  $n\ge 0$ and $\xi\in \mathrm{Irr}(G)$},
$$
with $\rho_{m,n}:=\mathrm{Ind}^{N(\mathrm{U}(1))}_{\mathrm{U}(1)}\phi_m\otimes \mathrm{Sym}^n(\mathrm{St})$, where ${\rm Sym}^n({\rm St})$ is the $n$-th symmetric power of the  standard 2-dimensional representation of ${\rm GL}(2)$. \\ 
We first prove the invertibility of the $L$-functions associated to the representations $\rho_{m,n}\otimes\xi$ for $m\ge 1$ and $n\ge 1$. Note that, the $\ell$-adic representation $\rho_{A,\ell}$ attached to $A$ decomposes into a direct sum $\rho_{A,\ell}=\rho_{E_1,\ell}\oplus\rho_{E_2,\ell}$ of the two $\ell$-adic representations $\rho_{E_1,\ell}$ and $\rho_{E_2,\ell}$ of $G_F$ attached to the two elliptic curves $E_1$ and $E_2$ respectively. Since $E_1$ has CM defined over $L$, its Galois representation is induced from some family of algebraic Hecke characters $(\varphi_\ell)_\ell$, that is, $\rho_{E_1,\ell}=\mathrm{Ind}_{G_L}^{G_F}\varphi_\ell$. Next, we extend $\rho_{n,m}$ to a representation $\tilde{\rho}_{n,m}$ of $G_\ell^{\text{Zar}}$. To make it rigorous and not just formal, since $G_\ell^{\mathrm{Zar}}\subset N(\mathrm{U}(1))\times \mathrm{GL}(2)$ we do so by restricting $\mathrm{Ind}^{N(\mathrm{U}(1))}_{\mathrm{U}(1)}\phi_m\otimes \mathrm{Sym}^n(\mathrm{St})$ to $G_\ell^{\text{Zar}}$. It then follows 
\begin{eqnarray}
\nonumber L\left(s,\rho_{m,n}\otimes\xi\right)
&=&\prod_{\mathfrak{p}} \det\left(1-\rho_{m,n}(g_\mathfrak{p})\otimes\xi(\sigma_\mathfrak{p})q^{-s}_{\mathfrak{p}}\right)^{-1},\\ \nonumber
 &=&\prod_{\mathfrak{p}} \det\left(1-\tilde{\rho}_{m,n}\circ\rho_{A,\ell}(\mathrm{Frob}_\mathfrak{p}))\otimes\xi(\sigma_\mathfrak{p})q^{-s-(m+n)/2}_{\mathfrak{p}}\right)^{-1},\\ \nonumber
&=&L\left(s+(m+n)/2,\mathrm{Ind}_{G_L}^{G_F}\varphi_\ell^m\otimes\mathrm{Sym}^n\rho_{E_2,\ell}\otimes\xi\right). 
\end{eqnarray}
Let ${\rm Frob}_{\mathfrak{P}}$ be the Frobenius element at a prime $\mathfrak{P}|\p$ in $G_L$. Taking into account that $\mathrm{Ind}_{G_L}^{G_F}\varphi_\ell^m\otimes\mathrm{Sym}^n\rho_{E_2,\ell}\cong \mathrm{Ind}_{G_L}^{G_F}(\varphi_\ell^m\otimes\mathrm{Sym}^n\rho_{E_2,\ell|L})$, and the identification $G=\mathrm{Gal}(KL/L)$ we have
\begin{eqnarray}
\nonumber L\left(s,\rho_{m,n}\otimes\xi\right)
&=& \prod_{\mathfrak{p}} \det\left(1-\mathrm{Ind}_{G_L}^{G_F}(\varphi_\ell^m\otimes\mathrm{Sym}^n\rho_{E_2,\ell|L})(\mathrm{Frob}_\mathfrak{p}))\otimes\xi(\sigma_\mathfrak{p})q^{-s-(m+n)/2}_{\mathfrak{p}}\right)^{-1},\\ \nonumber
 &=&\prod_{\mathfrak{P}|\mathfrak{p}} \det\left(1-(\varphi_\ell^m\otimes\mathrm{Sym}^n\rho_{E_2,\ell|L})(\mathrm{Frob}_\mathfrak{P})\otimes\xi(\sigma'_\mathfrak{P})q^{-s-(m+n)/2}_{\mathfrak{P}}\right)^{-1},\\ \nonumber
&=& L\left(s+(m+n)/2,\varphi_\ell^m\otimes\mathrm{Sym}^n\rho_{E_2,\ell|L}\otimes\xi\right), \nonumber
\end{eqnarray}
where $\sigma'_\mathfrak{P}:= \left(\frac{KL/L}{\mathfrak{P}}\right)$ the Artin symbol relative to the extension $KL/L$. Since $(\rho_{E_2,\ell|L})_{\ell}$ is a weakly compatible system of $\ell$-adic representations of $G_L$ which is strongly irreducible, totally odd, essentially self-dual and with Hodge-Tate multiset $\lbrace 0,1\rbrace$ by virtue of \thref{Potaut} one can find a finite Galois extension $L'/L$ containing $(LK)^{G_1}$ over which the system $(\varphi_\ell^m\otimes\mathrm{Sym}^n\rho_{E_2,\ell|L})_{\ell}$ is cuspidal automorphic. Now, we set $\tilde{G}=\mathrm{Gal}(L'/L)$  and write $\xi=\psi\otimes\chi$ where $\psi\in\mathrm{Irr}(G_1)$ and $\chi\in\mathrm{Irr}(G_2)$, and the invertibility of the $L$-function follows as before by making use of Brauer's theorem, Arthur--Clozel's solvable base change, and functoriality of $\mathrm{GL}(n+1)\times\mathrm{GL}(1)$. 

For $m\ge 1$, $n=0$, we have
$$
L(s,\rho_{m,0}\otimes\xi)=L(s+m/2,\varphi^{m}_{\ell}\otimes\xi).
$$
So, the invertibility follows in much the same way as in the last paragraph of the proof of \thref{thm1}. 

We are left with the representations $\rho_0^\varepsilon\otimes \mathrm{Sym}^n(\mathrm{St})\otimes\xi$ for $n\ge 0$ and $\varepsilon\in\{0,1\}$. First, for $n=0$, the invertibility of the $L$-function $L(s,\rho_0^\varepsilon\otimes\xi)$ is similar in spirit to the last paragraph of the proof of \thref{thm1}. For $n\ge 1$, we have
$$L(s,\rho_0^\varepsilon\otimes \mathrm{Sym}^n(\mathrm{St})\otimes\xi)=L(s,\delta\otimes \mathrm{Sym}^n\rho_{E_2,\ell}\otimes\xi),$$
where $\delta$ is a character of ${\rm Gal}(L/F)$. By \thref{Potaut} we get a Galois, totally real extension $L'$ containing $K^{G_1}$  over which the system $(\delta\otimes\mathrm{Sym}^n\rho_{E_2,\ell|L})_{\ell}$ is cuspidal automorphic. Now, the invertibility follows as usual from Brauer's theorem and functoriality of $\mathrm{GL}(n+1)\times\mathrm{GL}(1)$.
\end{proof}
\subsection{Abelian surfaces of type $\mathbf{E}$}
In this section we shall be concerned with the Chebotarev-Sato-Tate distribution on abelian surfaces with quaternionic multiplication. 
\begin{pro}\thlabel{thm3}
Let $A$ be an abelian surface over a totally real number field $F$ of Galois type $\mathbf{E}[C_n]$ for $n=1,3,4,6$ and $\mathbf{E}[C_2,\mathbb C]$ or $\mathbf{E}[C_2,\mathbb R\times \mathbb R]$ when $n=2$ and let $K/F$ be a Galois extension of finite degree with Galois group $G\cong G_1\times G_2$, such that $G_1$ is abelian and $K^{G_1}/F$ is totally real, then \thref{CST} holds for $A$.
\end{pro}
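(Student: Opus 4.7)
The plan follows the template established in \thref{thm1} and \thref{thm2}. Since $A$ is of type $\mathbf{E}$, over the minimal field $L$ either $A_L$ is isogenous to $E^2$ for some non-CM elliptic curve $E/L$, or $A_L$ is a simple abelian surface with quaternionic multiplication. In both cases, the action of $\mathrm{End}^0_L(A)\otimes\mathbb{Q}_\ell$ on $V_\ell(A_L)$ yields (after a scalar extension in the non-split QM case) a decomposition $V_\ell(A_L) \cong V_0 \otimes W$ with $W$ a trivial $2$-dimensional multiplicity space, and hence a rank $2$ weakly compatible system $\mathcal{R}=(\rho_\ell)_\ell$ of $\ell$-adic Galois representations of $G_L$. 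By Serre's open image theorem in the $E^2$ case, or by standard results of Ohta on $\ell$-adic representations of QM abelian surfaces in the latter case, $\mathcal{R}$ is strongly irreducible, totally odd, essentially self-dual, with Hodge-Tate multiset $\{0,1\}$.

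Next, note that $ST_A = E_n$ has connected component $ST_A^0 = \mathrm{SU}(2)$ (embedded diagonally in $\mathrm{USp}(4)$) and quotient $ST_A/ST_A^0 \cong C_n \cong \mathrm{Gal}(L/F)$. Since $\mathrm{Out}(\mathrm{SU}(2))$ is trivial and $H^2(C_n,\mathbb{C}^\times)=0$, Clifford theory shows that each irreducible representation $V_m := \mathrm{Sym}^m(\mathrm{St})$ of $\mathrm{SU}(2)$ extends to $ST_A$, with the set of extensions forming a torsor under $\widehat{C_n}$. Fixing one extension $\widetilde{V}_m$, the irreducible nontrivial representations of $ST = ST_A\times G$ are $\widetilde{V}_m \otimes \psi \otimes \xi$ with $m\ge 0$, $\psi\in\widehat{C_n}$ and $\xi\in\mathrm{Irr}(G)$ (excluding the all-trivial choice).

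Extending $\widetilde{V}_m \otimes \psi$ to an algebraic representation of $G^{\mathrm{Zar}}_\ell$ yields an $(m+1)$-dimensional $\ell$-adic representation $\pi_m(\psi)$ of $G_F$ with $\pi_m(\psi)|_{G_L} = \mathrm{Sym}^m\rho_\ell$, and
\[
L(s, \widetilde{V}_m \otimes \psi \otimes \xi) = L\bigl(s + \tfrac{m}{2},\, \pi_m(\psi)\otimes\xi\bigr).
\]
Applying \thref{Potaut} to $\mathcal{R}$ over $L$ (totally real or CM depending on the Galois type) produces a finite Galois extension $L'/L$ of the same type, containing $LK^{G_1}$, over which $\mathrm{Sym}^m\mathcal{R}|_{L'}$ is cuspidal automorphic. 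Writing $\xi = \chi\otimes\widetilde{\psi}$ with $\chi\in\mathrm{Irr}(G_2)$ and $\widetilde{\psi}\in\mathrm{Irr}(G_1)$, one inflates $\chi$ to a character of $\tilde G := \mathrm{Gal}(L'/F)$ (possible since $L' \supset K^{G_1}$) and applies Brauer's theorem to decompose $\chi = \bigoplus_i a_i\mathrm{Ind}_{H_i}^{\tilde G}\chi_i$ with $\chi_i$ one-dimensional on nilpotent subgroups $H_i$. For each $i$ with $L \subseteq L'^{H_i}$, Frobenius reciprocity together with Arthur--Clozel cyclic base change through the solvable tower $L'^{H_i}\subset L'$ and functoriality of $\mathrm{GL}(m+1)\times\mathrm{GL}(1)$ yields cuspidal automorphy of the twisted symmetric powers, and Rankin--Selberg theory combined with Shahidi's non-vanishing gives invertibility of the corresponding factor.

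The main obstacles are threefold. First, in the QM case the construction of $\mathcal{R}$ and verification of its properties rely on Ohta's decomposition of the Tate module, with the subtypes $\mathbf{E}[C_2,\mathbb{C}]$ and $\mathbf{E}[C_2,\mathbb{R}\times\mathbb{R}]$ requiring separate accounting. Second, the factors with $L \not\subset L'^{H_i}$ must be handled by an Asai-type factorization as in Case~2 of \thref{thm1}, which for $n\ge 3$ needs a mild generalization to the cyclic extension $L\cdot L'^{H_i}/L'^{H_i}$ of degree dividing $n$. Finally, the edge case $m=0$ reduces to invertibility of Hecke $L$-functions for characters of $\mathrm{Gal}(KL/F)$ and is handled by Brauer's theorem as in the final paragraph of the proof of \thref{thm1}.
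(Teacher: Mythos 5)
Your outline diverges from the paper's proof in a way that matters: the paper never works with a rank~$2$ system over $L$ at all. By \cite[Proposition 4.7]{FKRS}, for these Galois types $P=\mathrm{End}^0_F(A)$ is already an imaginary quadratic field, so $\rho_{A,\ell}$ splits over $F$ as $\rho_{A,\lambda}\oplus\rho_{A,\bar\lambda}$, and Ribet's inner-twist theorem gives $\rho_{A,\bar\lambda}\cong\rho_{A,\lambda}\otimes\varepsilon$ with $\varepsilon$ a character of $\mathrm{Gal}(L/F)$ of order $n$. Choosing a square root of $\varepsilon$ one writes $\rho_{A,\ell}\cong s_{A,\lambda}\otimes\delta$ with $s_{A,\lambda}$ two-dimensional over $G_F$ and $\delta$ an Artin representation with image $\mu_{2n}$ (resp.\ $\mathbf 1\oplus\varepsilon$ in the $\mathbf{E}[C_2,\mathbb R\times\mathbb R]$ case). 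Every $L$-function then becomes $L(s+k/2,\mathrm{Sym}^k\rho_{A,\lambda}\otimes\eta_k\otimes\xi)$ for a finite-order Hecke character $\eta_k$ of $F$, so invertibility needs only \thref{Potaut} over the totally real field $F$, Brauer, Arthur--Clozel, and $\mathrm{GL}(k+1)\times\mathrm{GL}(1)$ functoriality --- no Rankin--Selberg or Asai input, and no linear-disjointness hypothesis on $K$ and $L$, which is exactly why \thref{thm3} is stated without one.

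Your route reintroduces the difficulties that this trick is designed to avoid, and at the critical point it has a genuine gap. For the Brauer factors with $L\not\subseteq L'^{H_i}$ you appeal to ``an Asai-type factorization as in Case~2 of \thref{thm1}, which for $n\ge 3$ needs a mild generalization to the cyclic extension $L\cdot L'^{H_i}/L'^{H_i}$ of degree dividing $n$.'' This is not mild: the non-vanishing input used in \thref{thm1} (Grbac--Shahidi, \cite[Theorem 4.3]{GS15}) concerns Asai $L$-functions of \emph{quadratic} extensions, and no analogous off-the-shelf non-vanishing statement is available for cyclic extensions of degree $3$, $4$ or $6$; you neither prove one nor reduce to the quadratic case, so these factors remain unhandled. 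Two further points need justification in your setup and are simply absent: (i) applying \thref{Potaut} over $L$ requires $L$ to be totally real or CM, which you assert ``depending on the Galois type'' but do not verify (the paper sidesteps this by applying potential automorphy over $F$ itself); and (ii) the existence of the claimed $G_F$-extension $\pi_m(\psi)$ with $\pi_m(\psi)|_{G_L}=\mathrm{Sym}^m\rho_\ell$ is exactly the descent problem that the paper solves concretely via Ribet's character $\varepsilon$ and its square root, not something that follows formally from Clifford theory on $ST_A$. As written, the argument does not close; incorporating the decomposition $\rho_{A,\ell}\cong s_{A,\lambda}\otimes\delta$ over $F$ would both fill the gap and shorten the proof.
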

\begin{proof}
Firstly, notice that by \cite[Proposition 4.7]{FKRS}, $P={\rm End}^{0}_{F}(A)$ is an imaginary quadratic field. Accordingly, the $\ell$-adic Galois representation $\rho_{A,\ell}$ of $A$ splits up as direct sum $\rho_{A,\ell}=\rho_{A,\lambda}\oplus\rho_{A,\bar{\lambda}}$, of two 2-dimensional $\lambda$-adic representations of $G_F$, by means of the two embeddings $\bar{\lambda},\,\lambda: P\hookrightarrow\overline{\mathbb{Q}_\ell}$. At this point, one needs to break the proof into two cases.\\
\textbf{Case 1. $A$ of Galois type $\mathbf{E}[C_n]$ with $n=1,3,4,6$ or $\mathbf{E}[C_2,\mathbb C]$.}\\ The corresponding Chebotarev-Sato-Tate groups of these Galois types are $ST=ST_A\times G$ with
$$
ST_A=E_n:=\langle {\rm SU}(2),\Delta_n\rangle, \quad\text{for}\; n=1,2,3,4,6
$$
where $\Delta_n:=\begin{pmatrix}
          e^{i\pi/n}\cdot I   & 0\\
          0   &  e^{-i\pi/n}\cdot I 
\end{pmatrix}\in {\rm SUp}(4).$ The irreducible representations of $ST_A\cong {\rm SU}(2)\times \mu_{2n}$ are $\rho_{k}:={\rm Sym}^k({\rm St})\otimes\eta$ where $k\in\mathbb{N}$, ${\rm Sym}^k({\rm St})$ is the $k$-th symmetric power of the standard 2-dimensional representation of ${\rm GL}(2)$ and $\eta$ is an irreducible character of $\mu_{2n}$  such that $\eta(-I)=(-I)^k$. Hence, the irreducible representations of $ST$ are of the form $\rho_{k}\otimes \xi$ with $\xi\in{\rm Irr}(G)$.\\ 
By virtue of \cite[\S 3]{Ribet}, we have $\rho_{A,\bar{\lambda}}\cong\rho_{A,\lambda}\otimes\varepsilon$ for some character $\varepsilon : G_{F}\longrightarrow P^{\times}$. Note that, as ${\rm End}^{0}_{L}(A)$ is a quaternion algebra $\varepsilon$ must be a character of ${\rm Gal}(L/F)$, hence its order is $n$. Therefore, the representation $\rho_{A,\ell}$ decomposes into a tensor product $s_{A,\lambda}\otimes\delta$ of a $2$-dimensional representation $s_{A,\lambda}:=\rho_{A,\lambda}\otimes\varepsilon^{\frac{1}{2}}$ and an Artin representation $\delta:=\varepsilon^{-\frac{1}{2}}\oplus\varepsilon^{\frac{1}{2}}$ where $\varepsilon^{\frac{1}{2}}$ denotes an arbitrary square root of $\varepsilon$. Furthermore, the Zariski-closure of the image of $s_{A,\lambda}$ is ${\rm GL}_2(\overline{\mathbb{Q}}_\ell)$, and the image of $\delta$ is nothing but $\mu_{2n}$.\\
Consequently, $x_\mathfrak{p}$ is the conjugacy class of $g'_{\mathfrak{p}}q_{\mathfrak{p}}^{-1/2}\otimes h_{\mathfrak{p}}\otimes\sigma_{\mathfrak{p}}$ where $g'_{\mathfrak{p}}:=\iota\circ s_{A,\lambda}({\rm Frob}_\mathfrak{p}))$ and $h_{\mathfrak{p}}:=\delta({\rm Frob}_\mathfrak{p})$. Therefore, the $L$-function of $\rho_{k}\otimes\xi$ is
\begin{eqnarray*}
L(s,\rho_{k}\otimes\xi)&=&\prod_{\mathfrak{p}} \det\left(1-\mathrm{Sym}^k(g'_{\mathfrak{p}})\otimes\eta(h_{\mathfrak{p}})\otimes\xi(\sigma_\mathfrak{p})q_{\mathfrak{p}}^{-s-k/2}\right)^{-1}\\
&=& \prod_{\mathfrak{p}} \det\left(1-\mathrm{Sym}^k(\iota( \rho_{A,\lambda}({\rm Frob}_\mathfrak{p}))\otimes\eta_{k}({\rm Frob}_\mathfrak{p})\otimes\xi(\sigma_\mathfrak{p})q_{\mathfrak{p}}^{-s-k/2}\right)^{-1}\\
&=& L(s+k/2, {\rm Sym}^k \rho_{A,\lambda}\otimes\eta_{k}\otimes\xi)
\end{eqnarray*}
where $\eta_{k} : G_{F}\longrightarrow \mathbb{C}^{\times}$, $\eta_{k}(g):=\varepsilon^{\frac{k}{2}}(g)\otimes\eta(\delta(g))$ for every $g\in G_F$.\\
At this point, we need to check that these $L$-functions are invertible. Note that, $(\rho_{A,\lambda})_{\lambda}$ is a rank $2$ weakly compatible system of $\lambda$-adic representations of $G_F$ which is strongly irreducible, essentially self-dual, totally odd and with Hodge-Tate multiset $\lbrace 0,1\rbrace$, so, we invoke \thref{Potaut} to get a finite Galois extension $L'/F$ containing $K^{G_1}$ such that $({\rm Sym}^k \rho_{A,\lambda})_{|L'}$ is cuspidal automorphic on the other hand we may interpret $\eta_k$ as a Hecke character of $F$ via Artin reciprocity, hence, by the functoriality of  $\mathrm{GL}(k+1)\times\mathrm{GL}(1)$ we deduce that $({\rm Sym}^k \rho_{A,\lambda}\otimes\eta_{k})_{|L'}$ is cuspidal automorphic. As before, writing $\xi=\chi\otimes\psi$ for $\chi\in\mathrm{Irr}(G_2)$ and $\psi\in \mathrm{Irr}(G_1)$. Our assumptions, allows to regard $\chi$ as a character of $\tilde{G}:={\rm Gal}(L'/F)$ and the invertibility of the $L$-function follows as before by making use of Brauer's theorem, Arthur--Clozel's solvable base change, and functoriality of $\mathrm{GL}(k+2)\times\mathrm{GL}(1)$. In the case when $k=0$ and $\eta$ non-trivial the above $L$-function is merely the twisted Hecke $L$-function associated with a non-trivial character of ${\rm Gal}(L/F)$ and its inversibility may be proved as in the last paragraph of the proof of \thref{thm1}.\\
\textbf{Case 2. $A$ of Galois type $\mathbf{E}[C_2,\mathbb R\times \mathbb R]$.}\\ 
In this case the corresponding Sato-Tate group is
$$
ST_A=\langle {\rm SU}(2),J\rangle,
$$ 
where  $J=\begin{pmatrix}
0 & J_0 \\
-J_0 & 0
\end{pmatrix}$ and $J_0=\begin{pmatrix}
0 & 1 \\
-1 & 0
\end{pmatrix}$. Since $J$ commute with ${\rm SU}(2)$, it follows that $ST_A\cong {\rm SU}(2)\times C_2$. Hence, the irreducible representations of  $ST$ in this case are of the form ${\rm Sym}^k({\rm St})\otimes\eta\otimes\xi$, where ${\rm Sym}^k({\rm St})$ is the $k$-th symmetric power of the standard representation of ${\rm GL}(2)$, $\eta$ is an irreducible character of $C_{2}$ and $\xi\in {\rm Irr}(G)$. As in the preceding case we decompose the $\ell$-adic Galois representation $\rho_{A,\ell}$ of $A$ into a tensor product $\rho_{A,\lambda}\otimes\delta$ with $\delta:=\textbf{1}\oplus\varepsilon$ with $\varepsilon : G_{F}\longrightarrow P^{\times}$ is the Galois character obtained from Ribet's theorem. The same reasoning of the previous case applies to conclude the proof.
\end{proof}
Next, we deal with abelian surfaces $A$ over a totally real number field $F$ with Galois types $\mathbf{E}[D_{2n}]$, for $n=2,3,4,6$. From \cite[Proposition 4.7]{FKRS} there exist a quadratic extension $E/F$ such that $A_{E}$ has Galois type $\mathbf{E}[C_n]$ for $n=1,3,4,6$ and $\mathbf{E}[C_2,\mathbb C]$ or $\mathbf{E}[C_2,\mathbb R\times \mathbb R]$ when $n=2$.
\begin{pro}\thlabel{thm4}
Let $A$ be an abelian surface over a totally real number field $F$ of Galois type $\mathbf{E}[D_{2n}]$, for $n=2,3,4,6$ and let $K/F$ be a Galois extension of finite degree with Galois group $G\cong G_1\times G_2$, such that $G_1$ is abelian and $K^{G_1}/F$ is totally real, moreover, we assume that $E$ and $K$ are linearly disjoint over $F$, then \thref{CST} holds for $A$.
\end{pro}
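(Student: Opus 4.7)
The plan is to reduce \thref{CST} for $A$ to Proposition \thref{thm3} applied to $A_E$ via base change to the quadratic extension $E/F$. Since $E$ and $K$ are linearly disjoint over $F$, the extension $KE/E$ is Galois with $\mathrm{Gal}(KE/E) \cong G \cong G_1 \times G_2$, and $(KE)^{G_1} = K^{G_1} E$ remains totally real or CM according to $E$. Moreover $A_E$ has Galois type $\mathbf{E}[C_n]$ (or $\mathbf{E}[C_2,\mathbb{C}]$ or $\mathbf{E}[C_2,\mathbb{R}\times\mathbb{R}]$ when $n=2$), so the hypotheses of \thref{thm3} are satisfied at the $E$-level.

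First I would classify the irreducible representations of $ST = J(E_n) \times G$. Since $E_n$ is an index-$2$ subgroup of $J(E_n) = \langle E_n, J \rangle$, Clifford theory (as in \cite[Lemma 23]{Johansson}) yields two families of irreducible representations of $J(E_n)$: inductions $\mathrm{Ind}_{E_n}^{J(E_n)} \rho$ for $\rho \in \mathrm{Irr}(E_n)$ with $\rho^J \not\cong \rho$, and two distinct extensions $\rho^1, \rho^2$ of each self-$J$-conjugate irreducible $\rho$. Tensoring with $\xi \in \mathrm{Irr}(G)$ yields the irreducible representations of $ST$. For representations of the form $\mathrm{Ind}_{E_n}^{J(E_n)} \rho \otimes \xi$, invariance of $L$-functions under induction combined with $G \cong \mathrm{Gal}(KE/E)$ allows us to rewrite
\[
L\!\left(s,\mathrm{Ind}_{E_n}^{J(E_n)}\rho\otimes\xi\right) = L(s,\rho\otimes\xi')
\]
as an $L$-function at the $E$-level attached to the Sato-Tate group $E_n \times G$ of $A_E$; its invertibility is then immediate from \thref{thm3}.

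For the self-conjugate extensions $\rho^k \otimes \xi$ ($k=1,2$), I would mimic the strategy of Case 2 of \thref{thm1}. Extend $\rho^k$ to an algebraic representation of $G_\ell^{\mathrm{Zar}}$ by leaving the image of $J$ untouched, write $\xi = \psi \otimes \chi$ with $\psi \in \mathrm{Irr}(G_1)$ and $\chi \in \mathrm{Irr}(G_2)$, invoke \thref{Potaut} applied to the strongly irreducible, essentially self-dual, totally odd system $(\rho_{A_E,\lambda})_\lambda$ to obtain a finite Galois extension $L'/F$ (Galois over $\mathbb{Q}$) containing both $K^{G_1}$ and $E$ over which $\mathrm{Sym}^n \rho_{A_E,\lambda}$ is cuspidal automorphic, and apply Brauer's theorem to decompose $\chi = \bigoplus_i a_i \mathrm{Ind}_{H_i}^{\mathrm{Gal}(L'/F)} \chi_i$ into inductions from one-dimensional characters of nilpotent subgroups $H_i$. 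For each $i$ one distinguishes two subcases: if $E \subseteq L'^{H_i}$, the resulting factor $L$-function reduces to an invertible $L$-function already handled in the proof of \thref{thm3}; if $E \not\subseteq L'^{H_i}$, one forms the compositum $L'^{H_i}E$, obtains the factorization
\[
L(s, (\tilde\rho^1_n\circ\rho_{A,\ell})_{|L'^{H_i}}\otimes\psi_i)\cdot L(s, (\tilde\rho^2_n\circ\rho_{A,\ell})_{|L'^{H_i}}\otimes\psi_i) = L(s, (\tilde\rho^k_n\circ\rho_{A,\ell})_{|L'^{H_i}E}\otimes\psi_{i|L'^{H_i}E}),
\]
and interprets the left-hand factors as Asai / twisted-Asai $L$-functions of the automorphic representation on $\mathrm{GL}(n+1)$ coming from $\mathrm{Sym}^n\rho_{A_E,\lambda}\otimes\eta_k\otimes\psi_i$, invoking the non-vanishing on $\re(s)\ge 1$ from \cite[Theorem 4.3]{GS15}.

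The main obstacle I anticipate is precisely the self-conjugate subcase with $E\not\subseteq L'^{H_i}$, since the direct comparison with \thref{thm3} is unavailable there and one must genuinely appeal to the Asai $L$-function machinery, together with Arthur--Clozel solvable base change across $L'^{H_i}E/L'^{H_i}$. A secondary subtlety is ensuring that the extension produced by \thref{Potaut} can be arranged to be Galois over $F$ (rather than merely over $E$) and to simultaneously contain $K^{G_1}$, $E$, and all the necessary fields of automorphy; this is achieved by initially replacing $K$ by $KE$ and tracing through the proof of \thref{Potaut} carefully. The trivial-on-$E_n$ representations $\rho^k_0\otimes\xi$ are handled separately as twisted Hecke $L$-functions of $\mathrm{Gal}(E/F)$, exactly as in the final paragraph of the proof of \thref{thm1}.
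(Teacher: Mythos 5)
Your proposal takes a genuinely different route from the paper, and the route contains gaps. The paper's proof of \thref{thm4} does \emph{not} proceed by Clifford theory on $E_n\subset J(E_n)$ followed by Asai $L$-functions. Instead it exploits the direct-product structure $ST_A\cong \mathrm{SU}(2)\times D_{2n}/\langle(-I,-I)\rangle$ to parametrize irreducibles as $\mathrm{Sym}^k(\mathrm{St})\otimes\eta$ with $\eta\in\mathrm{Irr}(D_{2n})$ of matching parity, and — this is the key idea you are missing — tensor-factors the full $4$-dimensional $\ell$-adic representation as $\rho_{A,\ell}\cong s_{A,\lambda}\otimes\delta$ where $s_{A,\lambda}$ is a $2$-dimensional representation of $G_F$ (produced from Ribet's lemma and Hilbert's Theorem 90 applied to the twist character $\varepsilon$ on $G_E$) and $\delta=\mathrm{Ind}_{G_E}^{G_F}\gamma$ is an Artin representation of $G_F$ with image $D_{2n}$. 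The $L$-function then becomes $L(s+k/2,\mathrm{Sym}^k s_{A,\lambda}\otimes(\eta\circ\delta)\otimes\xi)$, where $\eta\circ\delta$ is either a Hecke character of $F$ or a $2$-dimensional dihedral representation induced from a Hecke character of $E$. In either case a single application of potential automorphy to $\mathrm{Sym}^k s_{A,\lambda}$, Brauer, and $\mathrm{GL}(k+1)\times\mathrm{GL}(1)$ functoriality suffices; no Rankin--Selberg convolution and no Asai non-vanishing theorem is invoked at all. This is much leaner than what you propose and is the real content of the proof.

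Two concrete problems with your plan. First, you reduce the induced representations to "\thref{thm3} applied to $A_E$", but \thref{thm3} is stated and proved only over a totally real base field, while $E$ may be a CM quadratic extension of $F$; you would need a CM-field version of \thref{thm3} and you do not supply it. Second, your appeal to the Asai machinery for the self-$J$-conjugate representations is modeled on Case~2 of \thref{thm1} (type $\mathbf{B}[C_2]$), but the structure is different here: in type $\mathbf{E}[D_{2n}]$ the two $\lambda$-adic components $\rho_{A_E,\lambda}$ and $\rho_{A_E,\bar\lambda}$ are twists of each other by a finite-order character $\varepsilon$, not merely Galois-conjugate non-isomorphic pieces, and the representation does not have the straightforward Rankin--Selberg convolution shape that underpins the Asai factorization used there. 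You correctly anticipate that this subcase is the "main obstacle," but the correct resolution (as the paper shows) is not to force Asai through — it is to eliminate the obstacle altogether by passing to the tensor factorization $s_{A,\lambda}\otimes\delta$, which keeps everything as a twist of a single symmetric power over the totally real $F$.
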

\begin{proof}
The corresponding Sato-Tate groups of these Galois types are
$$
ST_A=\langle {\rm SU}(2),\Delta_n,J\rangle,\qquad\text{with}\;\;n=2,3,4,6.
$$
Let's start by describing their irreducible representations. Taking into account that $ST_A\cong {\rm SU}(2)\times D_{2n}/\langle(-I,-I)\rangle$, it follows that the irreducible representations under consideration are of the form $\rho_{k}:={\rm Sym}^k({\rm St})\otimes\eta$ where $k\in\mathbb{N}$,  as before ${\rm Sym}^k({\rm St})$ stands for the $k$-th symmetric power of the standard representation of ${\rm SU}(2)$ and $\eta$ is an irreducible representation of $D_{2n}$ of matching parity, i.e, $\eta(-I)=(-I)^k$. Hence, the irreducible representations of the Chebotarev-Sato-Tate group $ST$ are of the form $\rho_k\otimes\xi$ with $\xi\in{\rm Irr}(G).$\\
From \cite[Proposition 4.7]{FKRS} we know that $P={\rm End}^{0}_{E}(A)$ is an imaginary quadratic field. Hence, the $\ell$-adic representation $\rho_{A_E,\ell}$ decomposes as a direct sum $\rho_{A_{E},\lambda}\oplus\rho_{A_{E},\bar{\lambda}}$ of two 2-dimensional $\lambda$-adic representations of $G_E$ by means of the two embeddings $\bar{\lambda},\lambda: P\hookrightarrow \overline{\mathbb{Q}}_\ell$. On the other hand, we have $\rho_{A,\ell}\cong {\rm Ind}_{G_E}^{G_F}\rho_{A_E,\lambda}$. It then follows from \cite[Lemma 3.1]{Ribet} that there exists a character of finite order $\varepsilon : G_E\longrightarrow \overline{P}^{\times}$ which is trivial on $G_F$ (as ${\rm End}^{0}_{L}(A)$ is a quaternion algebra) such that $\rho_{A_E,\lambda}^{\sigma}\cong\rho_{A_E,\lambda}\otimes\varepsilon$ where $\sigma\in\mathrm{Gal}(E/F)$ the canonical complex conjugation on $E$. Via Artin reciprocity $\varepsilon$ can be interpreted as a Hecke character of $E$ which is trivial on $\mathbb{A}_{F}^{\times}/F^{\times}$. Then, by Hilbert's theorem 90,
$\varepsilon$ can be written as $\varepsilon=\frac{\gamma^{\sigma}}{\gamma}$ for some Hecke character $\gamma$ of $E$. Then one has  $$(\rho_{A,\lambda}\otimes\gamma^{-1})^{\sigma}\cong \rho_{A,\lambda}\otimes\gamma^{-1}.$$
So, by \cite[Lemma 23]{Johansson}  the representation $\rho_{A,\lambda}\otimes\gamma^{-1}$ extends to a representation $s_{A,\lambda}$ of $G_{F}$. Observe that
 $$\rho_{A,\ell}\cong {\rm Ind}_{G_E}^{G_F}\rho_{A_E,\lambda}\cong {\rm Ind}_{G_E}^{G_F}(s_{A,\lambda|E}\otimes \gamma),$$
which provides us a decomposition of $\rho_{A,\ell}$ into a tensor product $s_{A,\lambda}\otimes\delta$ of a $2$-dimensional representation $s_{A,\lambda}$ such that the Zariski-closure of its image is ${\rm GL}_{2}(\mathbb{Q}_{\ell})$, and an Artin representation $\delta:={\rm Ind}_{G_E}^{G_F}\gamma$ such that its image is precisely $D_{2n}$, as proven in \cite[\S 3.3]{Noah}.\\
Therefore, $x_\mathfrak{p}$ is the conjugacy class of $g'_{\mathfrak{p}}q_{\mathfrak{p}}^{-1/2}\otimes h_{\mathfrak{p}}\otimes\sigma_{\mathfrak{p}}$ where $g'_{\mathfrak{p}}:=\iota\circ s_{A,\lambda}({\rm Frob}_\mathfrak{p}))$ and $h_{\mathfrak{p}}:=\delta({\rm Frob}_\mathfrak{p})$. Therefore, the partial $L$-function of $\rho_{n}\otimes\xi$ is
\begin{eqnarray*}
L(s,\rho_{k}\otimes\xi)&=&\prod_{\mathfrak{p}} \det\left(1-\mathrm{Sym}^k(g'_{\mathfrak{p}})\otimes\eta(h_{\mathfrak{p}})\otimes\xi(\sigma_\mathfrak{p})q_{\mathfrak{p}}^{-s}\right)^{-1}\\
&=& \prod_{\mathfrak{p}} \det\left(1-\mathrm{Sym}^k(\iota( s_{A,\lambda}({\rm Frob}_\mathfrak{p}))\otimes\eta\circ\delta({\rm Frob}_\mathfrak{p})\otimes\xi(\sigma_\mathfrak{p})q_{\mathfrak{p}}^{-s-k/2}\right)^{-1}\\
&=& L(s+k/2, {\rm Sym}^k s_{A,\lambda}\otimes\eta\circ\delta\otimes\xi).
\end{eqnarray*}
The task is now to establish the inversibility of the above $L$-function. Notice that $(s_{A,\lambda})_{\lambda}$ is a rank $2$ weakly compatible system of $\lambda$-adic representations of $G_F$ which inherits the properties of $(\rho_{A,\lambda})_{\lambda}$, consequently, invoking Theorem 2.2 to get a number field $L'$ such that ${\rm Sym}^k s_{A,\lambda|L'}$ cuspidal automorphic. On the other hand, $\eta\circ\delta$ is either a Hecke character or a $2$-dimensional dihedral representation. In the former case the proof can be achieved using the functoriality of ${\rm GL}(k)\times {\rm GL}(1)$ combined with Brauer-Taylor reduction as before. In the case when $\eta\circ\delta$ is a $2$-dimensional dihedral representation, there exist a Hecke character $\kappa$ of $E$, such that $\eta\circ\delta={\rm Ind}_{G_E}^{G_F}\kappa$. Since $E$ and $K$ are linearly disjoint over $F$, we may identify $G$ with ${\rm Gal}(KE/E)$. Let ${\rm Frob}_{\mathfrak{P}}$ be the Frobenius element at a prime $\mathfrak{P}|\p$ in $G_E$, then we have
\begin{eqnarray*}
 L(s+k/2, {\rm Sym}^k s_{A,\lambda}\otimes\eta\circ\delta\otimes\xi)&=&L(s+k/2, {\rm Sym}^k s_{A,\lambda}\otimes{\rm Ind}_{G_E}^{G_F}\kappa\otimes\xi)\\
&=& L(s+k/2, {{\rm Ind}_{G_E}^{G_F}(\rm Sym}^k s_{A,\lambda|E}\otimes\kappa)\otimes\xi)\\
&=&\prod_{\mathfrak{p}} \det\left(1-{{\rm Ind}_{G_E}^{G_F}(\rm Sym}^k s_{A,\lambda|E}\otimes\kappa)({\rm Frob}_{\mathfrak{p}})\otimes\xi(\sigma_\mathfrak{p})q_{\mathfrak{p}}^{-s-k/2}\right)^{-1}\\
&=& \prod_{\mathfrak{P}|\p} \det\left(1-({\rm Sym}^k s_{A,\lambda|E}\otimes\kappa)({\rm Frob}_{\mathfrak{P}})\otimes\xi(\sigma'_\mathfrak{P})q_{\mathfrak{P}}^{-s-k/2}\right)^{-1}\\
&=&  L(s+k/2, {\rm Sym}^k s_{A,\lambda}\otimes\kappa\otimes\xi).
\end{eqnarray*}
where $\sigma'_\mathfrak{P}:= \left(\frac{KL/L}{\mathfrak{P}}\right)$ the Artin symbol relative to the extension $KE/E$. Since $K^{G_1}$ is totally real the field $(EK)^{G_1}$ is either CM or totally real according to $E$. Invoking \thref{Potaut}, we get a number field $L'$ containing $(EK)^{G_1}$ such that $({\rm Sym}^k s_{A,\lambda}\otimes\kappa)_{|L'}$ is cuspidal automrphic. We set $\tilde{G}:={\rm Gal}(L'/E)$, now the proof can be achieved as before by using the usual Brauer's theorem argument and the functionality of ${\rm GL}(k+1)\times {\rm GL}(1)$.
\end{proof}
\paragraph*{Acknowledgments}
I wish to thank Gabor Wiese for drawing my attention to this problem, as well as for their invaluable comments on an earlier version.  I would also like to thank Christian Johansson and Noah Taylor for their helpful conversations.

\Addresses
\end{document}